\newtheorem{theorem}{Theorem}[section]
\newtheorem{prop}{Proposition}[section]
\newtheorem{lemma}{Lemma}[section]
\newtheorem{remark}{Remark}[section]
\newcommand{\E}{{\mathbb E}}
\newcommand{\RR}{{\mathbb R}}
\newcommand{\Z}{{\mathbb Z}}
\newcommand {\PP}{{\mathbb P}}
\newcommand{\sss}{\scriptscriptstyle}
\newcommand{\Zd}{\mathbb{Z}^d}
\newcommand{\N}{\mathbb{N}}
\newcommand{\0}{{\bf 0}}
\newcommand{\1}{{\bf 1}}
\newcommand{\cA}{\mathcal{A}}
\newcommand{\cD}{\mathcal{D}}
\newcommand{\vep}{\varepsilon}
\newcommand{\n}{{\bf n}}
\newcommand{\m}{{\bf m}}
\begin{document}

\title{Competing frogs on $\Zd$}\parskip=5pt plus1pt minus1pt \parindent=0pt
\author{Maria Deijfen\thanks{Department of Mathematics, Stockholm University; {\tt mia@math.su.se}} \and Timo Hirscher \thanks{Department of Mathematics, Stockholm University; {\tt timo@math.su.se}} \and Fabio Lopes \thanks{Instituto de Estad\'{i}stica, Pontificia Universidad Cat\'{o}lica de Valpara\'{i}so; {\tt fabio.lima@pucv.cl}} }
\date{January 2019}
\maketitle

\begin{abstract}
\noindent A two-type version of the frog model on $\Zd$ is formulated, where active type $i$ particles move according to lazy random walks with probability $p_i$ of jumping in each time step ($i=1,2$). Each site is independently assigned a random number of particles. At time 0, the particles at the origin are activated and assigned type 1 and the particles at one other site are activated and assigned type 2, while all other particles are sleeping. When an active type $i$ particle moves to a new site, any sleeping particles there are activated and assigned type $i$, with an arbitrary tie-breaker deciding the type if the site is hit by particles of both types in the same time step. We show that the event $G_i$ that type $i$ activates infinitely many particles has positive probability for all $p_1,p_2\in(0,1]$ ($i=1,2$). Furthermore, if $p_1=p_2$, then the types can coexist in the sense that $\PP(G_1\cap G_2)>0$. We also formulate several open problems. For instance, we conjecture that, when the initial number of particles per site has a heavy tail, the types can coexist also when $p_1\neq p_2$.
\vspace{0.3cm}

\noindent \emph{Keywords:} Frog model, random walk, competing growth, coexistence.

\vspace{0.2cm}

\noindent AMS 2010 Subject Classification: 60K35.
\end{abstract}

\section{Introduction}\label{sec:intro}

The so called frog model on $\Zd$ is driven by moving particles on the sites of the $\Zd$-lattice. Each site $x\in\Zd$ is assigned an initial number $\eta(x)$ of particles, where $\{\eta(x)\}_{x\in\Zd}$ are independent and identically distributed. We write $\nu$ for the product measure defined by this initial particle distribution. Each particle is then independently equipped with a discrete time simple symmetric random walk, denoted for particle $j=1,\ldots,\eta(x)$ at the site $x$ by $(S^{x,j}_n)_{n\in\N}$ and encoded by jumps rather than sites. A particle starts moving from its initial location and the associated random walk then specifies the movement of the particle in each time step. The set of all these random walks is denoted by $S=\{(S^{x,j}_n)_{n\in\N}:x\in\Zd,j=1,\ldots,\eta(x)\}$. At time 0, the particles at the origin are activated, while all other particles are sleeping. When a particle is activated, it starts moving according to its associated random walk so that, in each time step, it moves to a uniformly chosen neighboring site. When a site is visited by an active particle, any sleeping particles at the site are activated and start moving. If the origin is non-empty, this means that the set of activated particles grows to infinity.

The model has previously been studied e.g.\ with respect to transience/recurrence \cite{telcs}, the shape of the set of visited sites \cite{frogs_shape, frogs_shape_random} and extinction/survival for a version of the model including death of active particles \cite{phase_transition}. Here we introduce a two-type version of the model, where an active particle can be of either of two types. We study the possibility for the types to activate infinitely many particles and investigate in particular the event of coexistence, which is said to occur if both types activate infinitely many particles. Similar questions have been studied for other competition models on $\Zd$, for instance the so-called Richardson model where a site becomes type $i$ infected ($i=1,2$) at a rate proportional to the number of nearest neighbor sites of type $i$. In our model however, the type is associated with the moving particles rather than the sites.

\subsection{Definition of the model}\label{sec:definition}

To define the model, first assign an initial number of particles per site according to the product measure $\nu$ and equip each particle with a random walk from the set $S$, as described above. At time 0, the particles at the origin are activated and assigned type 1, while the particles at another site $z\in\Zd$ are activated and assigned type 2. All other particles are sleeping and do not yet have a type. The activated particles then move according to their associated random walks in $S$. A type $i$ particle makes a jump in a given time step independently with probability $p_i$ and stays at its present location with probability $1-p_i$. When a particle leaves its location after a geometrically distributed number of time steps, it jumps to the next location in its associated random walk.

We say that a site is \emph{discovered} when it is first hit by an active particle. It is said to be \emph{discovered by type $i$} if the first particle(s) that hits it is of type $i$. Note that a site can be discovered by both types -- this happens if particles of both types arrive at the site in the same time step. If there are sleeping particles at the discovered site, these are activated and assigned the same type as the active particle(s) that discovered the site. If the site is discovered by both types, we fix an arbitrary rule for deciding the type(s) of its particles. We may e.g.\ toss a coin (fair or biased), assign the type(s) based on the number of particles of each type that discover the site, or deterministically always decide in favor of a given type. All our results hold for any tie-breaker rule; see however Section \ref{sec:open} for a discussion on potential effects. Once it has been activated and assigned a type, a particle remains active and keeps its type forever.

Formally, we construct the process as follows. Let $(x,j)$ denote particle $j$ at the site $x\in\Zd$ and let $(L^{x,j}_{n,k})_{n,k\in\N}$ be a family of independent and identically distributed (i.i.d.) random variables associated with the particle $(x,j)$, where $L^{x,j}_{n,k}$ is uniform on $[0,1]$. Write $L=\{(L^{x,j}_{n,k})_{n,k\in\N}:x\in\Zd,j=1,\ldots,\eta(x)\}$. These variables control the delays of the particles compared to their associated random walks: Assume that a particle $(x,j)$ has made $n$ jumps since it was activated, and that the particle arrived at its current location $S^{x,j}_n$ at time $t$. Its next move (to $S^{x,j}_{n+1}$) occurs at time $t+k$ if and only if $L^{x,j}_{n,m}>p$ for all $m<k$ and $L^{x,j}_{n,k}\leq p$. The randomness in the process is hence summarized by $\Pi=(\nu,S,L)$. The rule for breaking ties may incorporate additional randomness, which we omit in the notation since it will not play a role for our arguments. Write $\PP_{\0,z}$ for the probability measure of the process started at time 0 with the particles at the origin $\0$ type 1 and the particles at $z$ type 2.

Before proceeding, we note that $(\nu,S,L)$ can be used to formally construct a one-type process based on lazy random walks with probability $p$ of jumping in each time step. Both for the one-type process and the two-type process, the construction provides a coupling of the processes for different values of $p$ and $(p_1,p_2)$, respectively, where the set of discovered sites increases with $p$ in the one-type process and the set of sites discovered by type 1 (2) increases with $p_1$ ($p_2$) in the two-type process if $p_2$ ($p_1$) is kept fixed. By symmetry, we may assume that $p_2\leq p_1$ in the two-type process.

\subsection{Results}\label{sec:results}

It follows from the results in \cite{telcs} that the time until any given site is discovered is finite almost surely. All particles will hence eventually be activated. Our first result is that both types have a strictly positive probability of outcompeting the other type, in the sense that it activates infinitely many particles, while the other type activates only finitely many. If the initial particle distribution allows for empty sites, this is trivial -- since the starting site of either type may then be empty thereby preventing the type from growing at all -- but we show that it is true also conditioning on a non-zero number of particles on both starting sites. Intuitively, the winning type then manages to capture all particles in an area that surrounds all particles of the other type and that is thick enough to prevent the surrounded type from traversing it. The event that infinitely many particles are activated by type $i$ is denoted by $G_i$ and $G_i^c$ denotes its complement.

\begin{prop}\label{pr:both_can_win}
For any initial distribution $\nu$, any $p_1,p_2\in(0,1]$ and any $z\in\Zd$, conditional on $\eta(\0)\geq 1$ and $\eta(z)\geq 1$, we have that $\PP_{\0,z}(G_1\cap G^c_2)>0$ and $\PP_{\0,z}(G^c_1\cap G_2)>0$.
\end{prop}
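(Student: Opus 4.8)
The plan is to construct an explicit event of positive probability on which type 1 wins (the argument for type 2 winning is symmetric). The intuition, already flagged in the text, is that type 1 should surround type 2 with a thick shell of particles that type 2 cannot penetrate before type 1 activates it. The key structural point is that everything is determined by the realization $\Pi=(\nu,S,L)$, so it suffices to pin down a finite collection of constraints on these variables — finitely many particle counts, finitely many random-walk steps, and finitely many delay variables — each occurring with positive probability and jointly independent, so that their intersection has positive probability.

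**First I would** fix a large radius $R$ (depending on $z$) so that both $\0$ and $z$ lie strictly inside the ball $B_R$, and I would designate an annular shell $A=B_{R+w}\setminus B_R$ of width $w$ to be thicker than the range of a single lazy-walk jump, i.e.\ $w\geq 2$ suffices since jumps are to nearest neighbors. On the event $\{\eta(\0)\geq 1\}$ I want the type 1 particle(s) from the origin to race outward and be the first to discover every site of the shell $A$, activating all the (nonempty) sites there as type 1, before any type 2 particle reaches the inner boundary $\partial B_R$. Concretely I would ask that each site in $A\cup\partial B_R$ carries at least one particle (positive probability under $\nu$ since these are i.i.d.\ and we may union finitely many $\{\eta(x)\geq 1\}$ events, or rather intersect them — each has positive probability and there are finitely many), that a chosen family of type 1 random walks in $S$ traces out paths covering the shell quickly, and that the delay variables $L$ for these type 1 particles are small (so $p_1$-jumps happen promptly) while the delay variables for the type 2 particles near $z$ are large enough that type 2 stalls inside $B_R$ during the relevant time window. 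Each of these is a constraint on finitely many coordinates of $\Pi$ and has positive probability.

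**The main obstacle** is the following tension: I must freeze only finitely many of the infinitely many random-walk steps and delays, yet I need to guarantee type 2 stays trapped forever, not just during a finite window. The clean way around this is topological rather than temporal. Once the entire shell $A$ has been discovered and activated by type 1 at some finite (random but almost surely finite, by the recurrence/finiteness input from \cite{telcs}) time, every particle inside $B_R$ — including all of type 2 — is enclosed by sites that are already type 1. Because the walks are nearest-neighbor and the shell has width at least the jump range, any type 2 particle attempting to cross $A$ must step onto a site of $A$, but those sites were discovered by type 1 first, so the crossing particle is itself already type 2 (it does not change type), yet it can only re-activate sleeping particles, and there are none left in $A$. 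Hence type 2 can never discover a site strictly outside $B_{R+w}$: to reach the exterior it would have to pass through $A$, and once it does it activates no new particles there. So on this event type 2 activates only the finitely many particles inside $B_{R+w}$, giving $G_2^c$, while type 1, having reached $\partial B_{R+w}$ with active particles and an infinite supply of sleeping particles beyond, discovers infinitely many sites and activates infinitely many particles with positive probability, giving $G_1$.

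**To finish** I would argue that the desired configuration of $(\nu,S,L)$ indeed has positive probability: the count constraints involve only finitely many sites, the path constraints involve only finitely many steps of finitely many walks, and the delay constraints involve only finitely many $L$-coordinates, and these three families of coordinates are independent, so the joint event is a positive-probability event. The only real care is in verifying that the chosen type 1 walks genuinely cover all of $A\cup\partial B_R$ \emph{and that the first visit to each such site is by type 1 rather than type 2} — this is where I must ensure the type 1 walks arrive strictly earlier, which is exactly what the small-delay constraints on type 1 and large-delay constraints on type 2 buy me over a sufficiently long but finite window; beyond that window, the topological trapping argument takes over and requires no further control. For the symmetric statement $\PP_{\0,z}(G_1^c\cap G_2)>0$, I interchange the roles of the two types, now surrounding the origin with a type 2 shell, which works identically since $p_2\in(0,1]$ as well.
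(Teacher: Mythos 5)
Your proposal has a genuine gap at its central step: the ``topological trapping'' claim is false in this model. Types are carried by particles, not by sites, so an annulus $A$ of sites already discovered by type 1 does not confine type 2 particles at all. A type 2 particle simply walks across $A$ (activating nothing there, as you correctly note) and emerges on the outside, where it is surrounded by \emph{undiscovered} sites holding sleeping particles; if it reaches such a site before type 1's cluster does, it activates those particles as type 2, and they in turn can recruit further particles. Since random walks are unbounded, the finitely many type 2 particles inside $B_{R+w}$ will almost surely exit the shell, so your event does not imply $G_2^c$. What is actually needed --- and what the paper supplies --- is a reason why the losing type never discovers \emph{any} new site after a finite time. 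This comes from Lemma \ref{le:SRW} combined with the shape theorem (packaged as Lemma \ref{le:finite}): a tagged particle moves diffusively, on scale $\sqrt{n}$, while the cluster of discovered sites grows linearly, so the particle is almost surely swallowed by the cluster and stops discovering sites after an a.s.\ finite time $N$. The paper's proof then couples the two-type process to a modified one-type process: the origin particles are stalled while a type 2 particle sweeps all of $\cD_m$ (a head start reproducing everything the one-type cluster had discovered by time $m$, chosen so that $\PP(N\le m)\ge 1/2$); when the origin particles resume their original trajectories, on $\{N\le m\}$ they never hit an undiscovered site again, so type 1 activates nothing beyond the origin. Note that even if you imported Lemma \ref{le:finite} to argue that each type 2 particle discovers only finitely many sites, that alone would not close your gap: finitely many discoveries per particle does not preclude an infinite chain of type 2 activations (a Galton--Watson-type recursion), which is exactly why the paper arranges \emph{zero} new discoveries for the losing type rather than finitely many.

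A secondary omission: your construction requires particles to stall (type 2 stalls inside $B_R$, and in the symmetric statement type 1 stalls), which is impossible when the relevant $p_i=1$. The paper treats the non-lazy case separately by having the would-be-stalled particles jump back and forth between two adjacent sites, and your argument would need the analogous fix.
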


Next we turn to the event $G_1\cap G_2$ that both types activate infinitely many sites. This corresponds to a power balance between the types in the sense that none of them manages to outcompete the other. We first show that whether this event has positive probability or not does not depend on the choice of the starting site $z$ for type 2 when $p_1\in(0,1)$. We may hence assume that type 2 starts at the site $\1=(1,0,\dots,0)\in\Zd$ next to the origin. We expect this to be true also when $p_1=1$, but the proof is based on a coupling argument that requires that particles can stay put in a given time step. It turns out however that a slight modification of the argument gives a weaker version when $p_1=1$; see Lemma \ref{le:odd_z}. This will be used to establish our main result when $p_1=p_2=1$.

\begin{prop}\label{pr:initial}
For any initial distribution $\nu$ and any $p_1,p_2\in(0,1)$, we have for any $z\in\Zd$ that $\PP_{\0,z}(G_1\cap G_2)>0$ if and only if $\PP_{\0,\1}(G_1\cap G_2)>0$.
\end{prop}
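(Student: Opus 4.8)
The plan is to exploit the Markov structure of the configuration process together with the flexibility that laziness gives for steering the dynamics over a finite time window. By a \emph{configuration} I mean the data needed to run the process forward: the set of already-discovered sites together with the positions and types of all currently active particles. Because the walk increments and the delay variables making up $\Pi$ are i.i.d.\ and independent of the past, the conditional law of the future given the history up to any finite time $T$ depends only on this configuration---and in fact not on particle identities, indices, or the number of jumps made so far, since a particle at a given site continues with fresh i.i.d.\ increments and delays regardless of its history. Hence $h(C):=\PP_C(G_1\cap G_2)$ is a well-defined function of the configuration alone, and for any start and any finite $T$ the tower property gives $\PP(G_1\cap G_2)=\E[h(\mathrm{config}_T)]$, where $\mathrm{config}_T$ is the configuration at time $T$.

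First I would record the reduction. Suppose $\PP_{\0,z}(G_1\cap G_2)>0$; I aim to show $\PP_{\0,\1}(G_1\cap G_2)>0$, the converse being identical after exchanging the roles of $z$ and $\1$. Since $h$ is genuinely a function of the configuration, the identity $\E_{\0,z}[h(\mathrm{config}_T)]>0$ lets me fix a finite $T$ and a configuration $C^\star$ that is reached from the $(\0,z)$-start with positive probability and satisfies $h(C^\star)>0$. By intersecting with a further positive-probability event before time $T$, I would additionally arrange that in $C^\star$ the site $\1$ has been discovered by type $2$: starting from $(\0,z)$, a type-$2$ particle can reach and discover $\1$ within finitely many steps without destroying coexistence, so the joint event still has positive probability. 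Note that $z$ and $\0$ are automatically discovered (by types $2$ and $1$ respectively) in every $(\0,z)$-reachable configuration.

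The heart of the argument is to show that this same configuration $C^\star$ is reachable from the $(\0,\1)$-start with positive probability. I would construct an explicit finite schedule in which all particles are frozen except a chosen few: route a type-$2$ particle out of $\1$ and a type-$1$ particle out of $\0$ so as to discover the sites of $C^\star$ one at a time, in an order under which the overall discovered set grows connectedly and each site receives the type it carries in $C^\star$, and finally steer every activated particle to the position it occupies in $C^\star$. Each instruction---a prescribed jump direction, a prescribed delay, or staying put---has positive probability, and since $C^\star$ involves only a finite discovered region and finitely many active particles, the whole schedule has positive probability; the one-at-a-time order also avoids simultaneous double-discoveries, so no tie-break needs to be controlled. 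Discovering sites one at a time is exactly where $p_i\in(0,1)$ enters, as it is laziness that lets me hold particles in place. Because $h$ ignores particle identities and jump-counts, I need only match the discovered set together with the active positions and types, not reconstruct the precise history; the strong Markov property then yields $\PP_{\0,\1}(G_1\cap G_2)\ge \PP_{\0,\1}(\mathrm{reach}\ C^\star)\,h(C^\star)>0$.

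The main obstacle is precisely this reachability step: one must verify that a configuration reachable from one start can be reproduced from the other, and the construction of an admissible freezing-and-routing schedule is the delicate part. It is also exactly where the hypothesis $p_i<1$ is essential---with $p_1=1$ type-$1$ particles cannot wait, the one-at-a-time steering breaks down, and only the weaker statement in Lemma~\ref{le:odd_z} survives. I would expect the remaining work in a full write-up to be bookkeeping: confirming that $h$ depends only on the discovered set and the active positions and types (so that matching configurations across the two starts is legitimate), and checking that the prescribed schedule is genuinely admissible, with no premature discoveries and with a sufficient supply of activated particles of each type to complete the routing.
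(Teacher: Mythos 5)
Your Markov-property framework (define $h(C)=\PP_C(G_1\cap G_2)$ on finite configurations, find a configuration reachable from both starts with $h>0$, conclude by the tower property) is a legitimate reformulation of what the paper does by sticky coupling, and the steering part is sound in spirit: with $p_1,p_2<1$, any fixed finite configuration whose discovered set can be grown connectedly from $\{\0,\1\}$, with prescribed discovery types and matching particle counts, is reached with positive probability. The fatal problem is the step you pass over in one sentence: arranging that in $C^\star$ the site $\1$ has been discovered by type 2 (and, which you do not mention but also need, that $\eta(\1)\geq 1$, since otherwise $G_2$ is impossible from the $(\0,\1)$-start). You justify this by saying a type-2 particle can be routed to $\1$ ``without destroying coexistence.'' That is not a proof: $G_1\cap G_2$ is not a finite-time event, and you cannot modify a realization on a finite window and claim the modified realization still lies in $G_1\cap G_2$ --- the rerouted particle discovers different sites, those sites activate as type 2, and the entire future changes. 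A priori it could be that on almost every coexistence realization from $(\0,z)$ the site $\1$, being adjacent to type 1's source, is discovered by type 1; ruling this out is essentially as hard as the proposition itself (indeed, the only way to verify your claim seems to be via the proposition's converse direction, which makes the argument circular). The same gap recurs, worse, in the converse direction you dismiss as ``identical,'' where you would have to force the far-away site $z$ to be discovered by type 2 from the $(\0,\1)$-start.

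The ingredient your proposal is missing, and the device the paper uses precisely to avoid any such forcing, is invariance. Instead of insisting on the pair $(\0,\1)$, the paper fixes a shortest path $\Gamma$ from $\0$ to $z$, conditions on a positive-probability label pattern $\gamma$ realized \emph{on} the coexistence event, and takes $y$ to be the first site of $\Gamma$ labelled 2 and $x$ its predecessor; these labels are then automatically correct, no forcing needed, and a finite coupling construction (including the trick of interchanging the particle numbers at $\0$ and $x$ to handle $\eta(x)=0$) yields $\PP_{x,y}(G_1\cap G_2)>0$. Translation and rotation invariance then converts the adjacent pair $(x,y)$ into $(\0,\1)$. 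If you add this device --- prove the statement first for \emph{some} adjacent pair carrying the right labels, then translate and rotate --- your reachability scheme can be completed. Two further points you would then still need to handle: take $T$ large enough that the two discovered clusters have merged, since a disconnected discovered set in $C^\star$ cannot be reproduced from a start at two adjacent sites; and note that the converse direction is not a symmetric repetition but requires a more elaborate construction (the paper labels all of $\cD_n$, selects $\lVert z\rVert_1$ type-2 sites there, and interchanges several particle counts).
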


Our main result is that coexistence has a strictly positive probability when $p_1=p_2$. We are convinced that this is true for any initial distribution, but the possibility of having empty sites causes some technical problems that we are only able to handle when the expected initial number of particles per site is finite.

\begin{theorem}\label{th:coex}
Assume that either $\eta(x)\geq 1$ almost surely or $\E[\eta(x)]<\infty$. Then $\PP_{\0,\1}(G_1\cap G_2)>0$ if $p_1=p_2\in(0,1]$.
\end{theorem}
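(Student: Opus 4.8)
The plan is to exhibit a single event of positive probability on which \emph{both} types are guaranteed to activate infinitely many particles, exploiting the exact symmetry available when $p_1=p_2=:p$. Write $R$ for the lattice reflection $x=(x_1,\dots,x_d)\mapsto(1-x_1,x_2,\dots,x_d)$, an isometry of $\Zd$ interchanging $\0$ and $\1$; since $p_1=p_2$, the law of the process under $\PP_{\0,\1}$ is invariant under $R$ composed with swapping the two type labels. The engine I would rely on is that, by \cite{telcs}, every site is eventually discovered, so a one-type frog model started from a nonempty populated region activates infinitely many particles almost surely; consequently a type belongs to $G_i$ as soon as it secures \emph{first access} to an infinite populated region. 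Using Proposition \ref{pr:initial} when $p\in(0,1)$, and Lemma \ref{le:odd_z} when $p=1$ (where the coupling behind Proposition \ref{pr:initial} is unavailable), I would reduce to the symmetric sources $\0,\1$ and aim to produce a positive-probability event $\mathcal{E}$ on which each type secures such access.

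The building block is the mechanism behind Proposition \ref{pr:both_can_win}: on a positive-probability finite-time event one type activates a thick shell of particles enclosing the source region of the other, so that the enclosed type ever meets only finitely many sleeping particles, while the enclosing type, left with an infinite populated region, wins. The essential observation is that this ``confinement'' is not a physical barrier—an active particle may wander anywhere—but a matter of priority: a type grows only by reaching \emph{sleeping} particles before the other type does.

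Accordingly, I would prove coexistence by a \emph{symmetric} version of this construction. Let $H_1=\{x:x_1\le 0\}$ and $H_2=\{x:x_1\ge 1\}=R(H_1)$, with bisecting hyperplane $\{x_1=\tfrac12\}$. The target is an $R$-symmetric, positive-probability, finite-time event $\mathcal{E}$ on which type $1$ activates a thick slab of particles lying just inside $H_1$ and straddling the source region, and type $2$ activates the mirror-image slab inside $H_2$, the two slabs meeting along the hyperplane. On $\mathcal{E}$, any particle sufficiently deep in $H_1$ can be reached from $\1$ only after crossing type $1$'s slab, where type $1$ has already arrived; hence type $1$ has first access to an infinite populated subset of $H_1$ and, running there effectively as a one-type frog model, activates infinitely many of its particles, so $G_1$ holds. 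By symmetry $G_2$ holds, whence $\PP_{\0,\1}(G_1\cap G_2)\ge\PP_{\0,\1}(\mathcal{E})>0$.

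The main obstacle is the interface along the bisecting hyperplane: one must design the two slabs so that neither type can ever pre-empt the other's deep territory, and must make this a genuine positive-probability \emph{finite}-time event rather than an infinite-time property. This is exactly where $p_1=p_2$ enters—the two activation fronts reach the hyperplane simultaneously, producing a symmetric standoff that the (arbitrary) tie-breaker resolves without either side penetrating, after which each type is alone with its own half-space. The secondary difficulty is empty sites, which can puncture a slab: under $\eta(x)\ge 1$ almost surely the slabs are automatically populated, whereas under $\E[\eta(x)]<\infty$ one must still verify that a suitably populated $R$-symmetric configuration occurs with positive probability, and it is precisely here that the moment assumption is needed. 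For the case $p=1$ the same construction is carried out starting from the admissible symmetric configuration provided by Lemma \ref{le:odd_z}.
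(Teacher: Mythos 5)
Your construction has a genuine gap at its core: the claim that, on a finite-time positive-probability event $\mathcal{E}$, type 1 ``secures first access to an infinite populated subset of $H_1$.'' First, a finite-time event can only activate finitely many sites, so the ``slab'' you build is necessarily a finite box; it cannot separate the deep part of $H_1$ from $\1$, and type 2's particles can simply walk around it or straight through it. Second, and more fundamentally, activation confers no priority: an activated region contains no sleeping particles, so active type 2 particles traverse it freely and then race type 1's particles, on equal terms, for the sleeping particles beyond. Who wins those races is a property of the entire future evolution, not of the configuration at the end of $\mathcal{E}$ --- so ``first access to an infinite populated region'' is exactly what has to be proved, not something the event can hand you. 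Your appeal to the mechanism of Proposition \ref{pr:both_can_win} does not transfer: there, confinement works only because the confined type's particles follow prescribed trajectories whose range of newly discovered sites is almost surely finite (Lemma \ref{le:finite}), so pre-empting a fixed \emph{finite} region $\cD_m$ suffices; for coexistence both types must grow without bound, and no finite pre-emption can do that job. The ``symmetric standoff at the hyperplane resolved by the tie-breaker'' is not a mechanism either: $p_1=p_2$ gives equality in distribution, not pathwise simultaneity of two rough, fluctuating fronts that cross the hyperplane constantly. Note also that your reflection symmetry is broken by a biased or deterministic tie-breaker, which the theorem explicitly allows, and that your placement of the hypothesis $\E[\eta(x)]<\infty$ is off: any fixed finite pattern of non-empty sites has positive probability under any $\nu$ with $\PP(\eta(x)\geq 1)>0$, so the moment condition cannot be what makes a symmetric populated configuration possible.

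The paper proves the theorem by a completely different, non-geometric route (following Garet and Marchand), precisely to avoid confinement arguments. Assume $\PP_{\0,\n}(G_1\cap G_2)=0$; then some type, say type 2, wins with probability at least $1/2$. By Lemma \ref{le:finite} the losing type then discovers only finitely many sites, so $T(\n,-\m)\leq T(\0,-\m)$ with probability at least $1/4$ for large $m$. Combined with subadditivity, $T(\n,-\m)-T(\0,-\m)\leq T(\n,\0)$, and the conditional expectation bound \eqref{eq:cond_exp}, this caps $\E^{\0,\n}[T(\n,-\m)-T(\0,-\m)]$ at roughly $\tfrac34(1+4\vep)n\mu$, contradicting the lower bound \eqref{eq:lower_bd} of order $(1-\vep)n\mu$ that follows from the subadditive ergodic theorem via \eqref{eq:mu}. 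The hypothesis $\E[\eta(x)]<\infty$ enters only through Lemma \ref{le:non_zero}, to control the effect on expected passage times of conditioning on non-empty starting sites when $\nu$ allows empty sites; Proposition \ref{pr:initial} and Lemma \ref{le:odd_z} are then used to move the conclusion from the sources $(\0,\n)$ back to $(\0,\1)$. If you want to pursue a direct symmetry-based construction, you would be heading toward the much harder H\"aggstr\"om--Pemantle style of argument, and the half-space/standoff picture would need to be replaced by a genuinely probabilistic one; as written, $\mathcal{E}$ does not imply $G_1\cap G_2$.
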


In other competition models on $\Zd$, the typical picture is that two species can coexist if and only if they are identical in the sense that they grow according to the same dynamics with the same parameter values. One might guess that the situation is similar here so that coexistence is not possible when $p_1\neq p_2$ and the types can hence coexist if and only if $p_1=p_2$. However, we do not think this is true. In particular, we think that, when the initial distribution has a very heavy tail, then the types can coexist for all values of $p_1$ and $p_2$. We comment further on this in Section \ref{sec:open}.

An important ingredient in the proof of all our results is the shape theorem for the one-type frog model. This was established in \cite{frogs_shape} starting with one particle per site and generalized in \cite{frogs_shape_random} to arbitrary initial distributions. Both versions concern the one-type model based on non-lazy random walks, but we will need the result also for a lazy version of the process, where the particles move according to lazy random walks with a probability $p\in(0,1]$ of jumping in each time step. This follows from the same proof as in \cite{frogs_shape,frogs_shape_random}; see the appendix.

To formulate the theorem, let $\xi_n(p)$ denote the set of discovered sites in a one-type process started from the origin where all particles move according to lazy random walks that have probability $p$ of moving in each time step. Formally, we use the family $L$ introduced above to control the delays of the random walks in $S$ to obtain the movements of the particles. Write $\bar{\xi}_n(p)=\{x+(1/2,1/2]^d:x\in\xi_n(p)\}$.

\begin{theorem}[General shape theorem]\label{th:shape}
For any $\nu$ and $p\in(0,1]$, there exists a non-empty convex set $\cA=\cA(\nu,p)$ such that, conditional on $\eta(\0)\geq 1$ and for any $\vep\in(0,1)$, almost surely
$$
(1-\vep)\cA\subset\frac{\bar{\xi}_n(p)}{n}\subset(1+\vep)\cA
$$
for large $n$.
\end{theorem}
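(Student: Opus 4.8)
The plan is to follow the classical route to shape theorems: introduce passage (discovery) times, establish a subadditive structure, invoke Kingman's subadditive ergodic theorem to obtain a deterministic time constant, and finally upgrade the resulting directional limits to the uniform statement about $\bar{\xi}_n(p)/n$. Throughout, the observation that makes the adaptation to lazy walks routine is that the environment $\Pi=(\nu,S,L)$ is i.i.d.\ and invariant under lattice translations, and that laziness (jumping with probability $p$) only rescales the spreading speed by a bounded factor while leaving this invariance intact.

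First I would define, for $x\in\Zd$, the discovery time $T(x)=\inf\{n:x\in\xi_n(p)\}$, and more generally a passage time $t(x,y)$ measuring the time for the activation front of a frog process ``restarted'' at $x$ — driven by the environment $\Pi$ shifted by $x$, with $x$ playing the role of the new source — to reach $y$. Coupling this restarted process to the genuine one through the same walks $S$ and delays $L$, an induction on the order of activation shows that the true process activates every particle no later than time $T(x)$ plus its activation time in the restarted process; in particular the true front reaches $x+y$ no later than $T(x)+t(x,x+y)$. This yields the subadditivity $T(x+y)\le T(x)+t(x,x+y)$, and translation invariance of $\Pi$ makes the family $\{t(mx,nx)\}_{m\le n}$ stationary in the sense required by Kingman's theorem (here $t(x,x+y)$ equals $T(y)$ evaluated at the shifted environment).

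Next I would apply the subadditive ergodic theorem along each fixed direction to conclude that $T(nx)/n\to\mu(x)$ almost surely and in $L^1$, with $\mu(x)=\inf_n \E[T(nx)]/n$. Homogeneity and the triangle inequality inherited from subadditivity let me extend $\mu$ to a positively homogeneous, subadditive — hence convex — function on $\RR^d$ that is Lipschitz, with $\mu(x)>0$ for $x\neq\0$ because the front advances at speed at most one per unit time. Setting $\cA=\{x\in\RR^d:\mu(x)\le 1\}$ then gives a non-empty convex set, and $\mu(x)<\infty$ guarantees that $\cA$ has non-empty interior. The final step converts the directional convergence into the two-sided containment: the outer bound follows by showing, via tail estimates and Borel--Cantelli, that no site $x$ with $\mu(x)>(1+\vep)n$ is discovered by time $n$ for large $n$, while the inner bound follows by covering the compact set $(1-\vep)\cA$ with a finite net of directions, applying directional convergence at each, and interpolating using continuity of $\mu$ together with the monotonicity ``once discovered, always discovered.''

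The hard part will be the integrability and tail control of the passage times. Kingman's theorem requires $\E[T(x)]<\infty$, and the finiteness of the time constant $\mu$ — equivalently the non-degeneracy of $\cA$ — hinges on a tail bound of the form $\PP(T(x)>Cn)\le e^{-cn}$ along the direction $x$; the same estimates are what make the convergence uniform across directions in the inner and outer bounds. Establishing these bounds is exactly the content of \cite{frogs_shape,frogs_shape_random}, and the only genuinely new check is that they survive the introduction of laziness and, when $\nu$ charges $0$, that empty sites do not assemble into blocking regions too often — both of which hold because the geometric delays are tight and empty sites occur independently with probability below one, so the comparison with supercritical oriented percolation underlying the original tail bounds is unaffected up to the constants $c,C$ depending on $p$ and $\nu$.
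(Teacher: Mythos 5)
Your skeleton---passage times, subadditivity, Kingman's theorem along fixed directions, then the standard covering argument upgrading directional limits to the two-sided containment---is precisely the machinery underlying the cited proofs in \cite{frogs_shape,frogs_shape_random}, and the paper does not redo it either; it defers that skeleton to those references. You also correctly identify the crux: everything hinges on integrability and tail control of the passage times for the \emph{lazy} process, i.e.\ on a bound of the form $\PP(T_p(\0,x)\geq m)\leq \alpha\exp\{-cm^{\beta}\}$. But at exactly this point your proposal has a genuine gap. You assert that the bounds of \cite{frogs_shape,frogs_shape_random} survive laziness because ``the geometric delays are tight'' and because ``the comparison with supercritical oriented percolation underlying the original tail bounds is unaffected.'' Neither clause is an argument. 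First, the original tail bounds do not come from an oriented-percolation comparison; as the paper's appendix makes explicit, they rest on random-walk range estimates (that an $n$-step walk visits at least $n^{1/4}$ sites in $n^{1/2}$ steps with probability bounded below) fed into a recursive construction, so the robustness you invoke is attributed to the wrong mechanism and is never actually verified. Second, tightness of individual delays does not by itself give a stretched-exponential tail: the number of delays incurred along the activation history leading to the discovery of $x$ is itself random and of the same order as $T_1(\0,x)$, so one must control a sum of a \emph{random} number of geometric variables.

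The paper closes exactly this gap with a short coupling argument that is absent from your proposal: build the lazy and non-lazy processes from the same walks $S$, fix the $T_1(\0,x)$ particle jumps leading to the discovery of $x$ in the non-lazy process, and let $D_i$ be the geometric$(p)$ waiting time preceding the $i$th such jump. Then
$$
T_p(\0,x)\;\leq\;T_1(\0,x)+\sum_{i=1}^{T_1(\0,x)}D_i,
$$
and splitting on whether $T_1(\0,x)\geq m$, together with a Chernoff bound for the resulting negative binomial sum, yields $\PP\left(T_p(\0,x)\geq 4m/p\right)\leq \alpha'\exp\{-cm^{\beta}\}$ directly from the known non-lazy bound. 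The virtue of this reduction is that one never has to reopen the internal arguments of \cite{frogs_shape} for the key estimate at all; the only internal checks needed concern the $m$-good-configuration machinery of \cite{frogs_shape_random} for empty sites, where the range estimates do transfer verbatim to lazy walks. To repair your proposal, replace the sentence asserting robustness with this coupling-plus-concentration step, or else explicitly verify that every random-walk estimate used in \cite{frogs_shape,frogs_shape_random} holds for lazy walks; either route works, but as written the decisive step is asserted rather than proved.
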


Characterizing the shape $\cA$ largely remains an open problem. However a few things can be said about how the shape depends on the initial distribution $\nu$ and the parameter $p$.  By construction of the process, we have that $\xi_n(p)\subseteq\xi_n(p')$ for $p\leq p'$ and thereby $\cA(\nu,p)\subseteq\cA(\nu,p')$ for any $\nu$. For $x\in\RR^d$, let $\|x\|_1$ denote the $L_1$-norm of $x$ and let $\cD=\{x\in\RR^d:\|x\|_1\leq 1\}$. Due to the discrete nature of the model, the shape cannot exceed $\cD$, that is, $\cA(\nu,p)\subseteq\cD$ for any $\nu$ and $p$. In \cite{frogs_shape_random}, it is shown that, if $\nu$ is such that the initial number of particles $\eta(x)$ per site $x$ has a heavy tail, then $\cA(\nu,1)=\cD$. A minor modification of that proof shows that the conclusion remains valid also for $p<1$; see the appendix for a brief outline. Intuitively, if there are very many particles per site then, with overwhelming probability, one particle will jump to each neighbor in a given step even if the probability of jumping per particle is small.

\begin{theorem}\label{th:diamond}
Assume that $\nu$ satisfies $\PP(\eta(x)\geq n)\geq (\log n)^{-\delta}$ for some positive $\delta<d$ and all $n$ large enough. Then $\cA(\nu,p)=\cD$ for any fixed $p\in(0,1]$.
\end{theorem}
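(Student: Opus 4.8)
The plan is to prove the reverse inclusion $\cD\subseteq\cA(\nu,p)$, since $\cA(\nu,p)\subseteq\cD$ is already noted. Because $\cA$ is convex, invariant under the lattice symmetries (which preserve both $\nu$ and the dynamics), and $\cD=\mathrm{conv}\{\pm e_i\}$, it suffices to show that the speed in the direction $e_1$ is maximal, i.e.\ that the discovery time $T(ne_1)$ of $ne_1$ satisfies $T(ne_1)\le(1+\vep)n$ eventually, for every $\vep>0$. Since a single step changes the $L_1$-norm by at most one we always have $T(ne_1)\ge n$, so this forces $T(ne_1)/n\to1$ and hence $e_1\in\cA$ via Theorem~\ref{th:shape}. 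Throughout, the only effect of taking $p<1$ rather than $p=1$ is that the probability that a given particle makes a prescribed unit step in one time unit is $p/(2d)$ instead of $1/(2d)$; writing $c:=2d/p$, the argument of \cite{frogs_shape_random} carries over with $2d$ replaced by $c$.

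The engine is a flooding estimate for sites with many particles. Call a site $z$ a \emph{reservoir of radius $\rho$} if it carries at least $c^{\rho}$ particles. If such a reservoir is activated at time $\tau$, then for any fixed $\gamma\in(0,1)$ every site $y$ with $\|y-z\|_1\le(1-\gamma)\rho$ is discovered by time $\tau+(1-\gamma)\rho$ with probability at least $1-(2\rho)^{d}\exp(-c^{\gamma\rho})$. Indeed, each reservoir particle follows a fixed monotone lattice path to $y$ in exactly $\|y-z\|_1$ steps with probability $c^{-\|y-z\|_1}\ge c^{-(1-\gamma)\rho}$; the particles move independently, so the expected number reaching $y$ is at least $c^{\gamma\rho}$, and a Chernoff bound together with a union bound over the $O(\rho^{d})$ targets gives the claim. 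Thus a reservoir of radius $\rho$ floods the entire $L_1$-ball of radius $(1-\gamma)\rho$ around it at speed one.

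The reaches are then chained using a volume argument, and this is exactly where the hypothesis $\delta<d$ enters. Suppose the front has activated a reservoir $z_k$ of radius $\rho_k$. Flooding its ball (in time $\le\rho_k$) activates all $\asymp\rho_k^{d}$ sites in the forward half-ball. By the tail bound $\PP(\eta(x)\ge N)\ge(\log N)^{-\delta}$, the probability that none of these carries at least $N$ particles is at most $\exp(-c'\rho_k^{d}(\log N)^{-\delta})$; choosing $\log N=\lambda\rho_k^{d/\delta}$ makes this the small constant $\exp(-c'\lambda^{-\delta})$, so we find a reservoir $z_{k+1}$, with $e_1$-coordinate at least that of $z_k$, of radius $\rho_{k+1}\ge\lambda'\rho_k^{d/\delta}$. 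Since $\delta<d$ gives $d/\delta>1$, the radii explode, $\rho_{k+1}\gg\rho_k$, and $\sum_{j\le k}\rho_j=\rho_k(1+o(1))$. A short bootstrap phase starts the chain: by Theorem~\ref{th:shape} the cluster from the origin already grows at some positive speed, so it reaches a first reservoir of bounded radius in $O(1)$ time. Iterating until $\rho_K\ge n$, the forward boundary of the final flood passes $ne_1$ at a time $\sum_{k\le K}\rho_k+o(n)=n(1+o(1))$, which is the required bound.

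The main obstacle is the probabilistic bookkeeping that turns this construction into an almost sure statement. The stages reuse the single frozen field $\{\eta(x)\}$ and their balls may overlap, so the successive choices are not independent; one must either route the chain through disjoint regions or absorb the few ($O(\log\log n)$) dependencies into a union bound, and then combine the per-stage failure probabilities (flood failure $\le(2\rho)^d\exp(-c^{\gamma\rho})$ and reservoir-search failure $\le\exp(-c'\lambda^{-\delta})$) through a Borel--Cantelli argument to obtain $T(ne_1)\le(1+\vep)n$ for all large $n$ almost surely. The conceptual crux, however, is the single relation $\rho_{k+1}\sim\rho_k^{d/\delta}$: it is explosive precisely when $\delta<d$, marginal at $\delta=d$, and contractive beyond, which is exactly why $\delta<d$ is the threshold for $\cA=\cD$.
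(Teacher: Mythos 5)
Your two building blocks are exactly the ones the paper uses: a site carrying $c^{\rho}$ particles (with $c=2d/p$) floods the $L_1$-ball of radius $(1-\gamma)\rho$ around it at speed one, because for each target some particle follows a geodesic with no lazy steps (probability $c^{-\ell}$ per particle, then Chernoff and a union bound); and the tradeoff between the $\asymp r^d$ sites in an activated ball and the tail $\PP(\eta\geq N)\geq(\log N)^{-\delta}$ is precisely where $\delta<d$ enters, in the paper as in your write-up. The genuine gap is in the chaining, specifically the time accounting. Your radii obey the deterministic recursion $\rho_{k+1}\asymp\rho_k^{d/\delta}$, so $(\rho_k)$ is a fixed, doubly exponentially growing sequence, and for a generic target $n$ the index $K=\min\{k:\rho_k\geq n\}$ can have $\rho_{K-1}$ equal to a constant fraction of $n$ (nothing prevents the sequence from containing, say, the value $n/2$). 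In that case the time spent before $z_K$ is even activated is already $\approx(1-\gamma)\rho_{K-1}\approx(1-\gamma)n/2$, and the last flood still needs $\|\n-z_K\|_1\geq n-\|z_K\|_1$ further steps, so what your construction yields is a bound of order $2n$, not $(1+\vep)n$; and you cannot stop one stage earlier, because the flood of $z_{K-1}$ has range $(1-\gamma)\rho_{K-1}<n$ and never reaches $\n$. Hence the displayed claim $\sum_{k\leq K}\rho_k+o(n)=n(1+o(1))$ is false for such $n$: it holds only along the sparse subsequence of $n$ sitting just above some $\rho_k$, while your own reduction requires $T(\0,\n)\leq(1+\vep)n$ eventually for every $\vep$ (a subsequence bound would suffice only if you additionally invoked the a.s.\ subadditive limit $T(\0,\n)/n\to\mu$ to transfer it, which you do not do). There is also an ignition problem: the recursion $\rho_{k+1}=\lambda'\rho_k^{d/\delta}$ with $\lambda'<1$ has a repelling fixed point at $(\lambda')^{-\delta/(d-\delta)}$ and explodes only when started above it, so ``a first reservoir of bounded radius in $O(1)$ time'' does not get the chain going.

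The paper's architecture avoids all of this by not iterating. Fix $\theta\in(\delta/d,1)$. By the linear-growth estimate (Lemma \ref{le:liten_boll}, the lazy version of Lemma 2.5 of \cite{frogs_shape_random}), by time $n^{\theta}=o(n)$ the ball $\cD_{\tau n^{\theta}}$ is discovered; among its $\asymp n^{\theta d}$ sites, each independently carries at least $(4d/p)^n$ particles with probability $\geq Cn^{-\delta}$, so such a one-shot reservoir exists and is activated except with probability $\exp(-C'n^{\theta d-\delta})$ --- this single step is where $\delta<d$ is used --- and one flood from it covers $\cD_{n-2Cn^{\theta}}$ by time $n+n^{\theta}$, in all directions at once. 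In other words, the paper performs exactly one step of your recursion (with exponent $1/\theta$ slightly below $d/\delta$), but starts it from the ball produced by ordinary linear growth rather than from a chained sequence of floods; this makes the bootstrap cost $o(n)$, requires a single reservoir search so the dependence bookkeeping you postpone never accumulates, and feeds directly into a Borel--Cantelli argument over $n$. To salvage your version one would have to either pass to the subsequence $n_K\asymp(1-2\gamma)\rho_K$ and appeal to the subadditive limit, or redesign the chain with geometrically (or slowly) growing radii and reservoirs placed in the forward cap of each flood; both are substantive repairs, not patches of the argument as written.
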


We describe possible implications of this result for the possibility of coexistence in Section \ref{sec:open} below, where we have collected open problems and suggestions for further work. Section \ref{sec:related} contains references to previous work on competition on $\Zd$. Proposition \ref{pr:both_can_win} and \ref{pr:initial} are then proved in Section \ref{sec:prop_both} and Section \ref{sec:prop_initial}, respectively, and Theorem \ref{th:coex} is proved in Section \ref{sec:th_proof}. Finally, some details on how Theorem \ref{th:shape} and Theorem \ref{th:diamond} are derived using their counterparts for $p=1$ are given in the appendix.

\subsection{Open problems}\label{sec:open}

Here we describe some open problems for the model, and some modifications of the model that might be worth further study.

\noindent \textbf{Coexistence and the shape.} A natural question is if Theorem \ref{th:coex} has a counterpart for $p_2<p_1$ saying that coexistence is then impossible. We do not think that this is the case. Instead, we expect that two types can coexist if and only if their one-type shapes coincide, that is, if $\nu$ and $(p_1,p_2)$ are such that $\cA(\nu,p_1)=\cA(\nu,p_2)$. According to Theorem \ref{th:diamond}, for any $p_1,p_2\in(0,1]$, a process with $p=p_1$ and one with $p=p_2$ both give the same maximal shape $\cD$ when $\nu$ has a sufficiently heavy tail, indicating that type 1 can then coexist with a strictly weaker type 2 if our intuition is correct.

To establish our intuition, one would have to show that, if the type 1 shape is strictly larger than the type 2 shape and type 1 activates infinitely many particles, then type 1 will sooner or later use its larger speed to surround type 2. To do this, one might try to generalize arguments used for first passage percolation; see e.g.\ \cite{cont_comp,HP2}. They are however incomplete in the sense that they cannot rule out the possibility of type 1 surviving in a weak sense, that is, growing unboundedly but occupying only a vanishing fraction of the active sites.

For a given initial distribution $\nu$, how is the shape affected by $p$? This is of independent interest, but would also be worth studying in view of its potential relevance for the possibility of coexistence. As pointed out above, we have $\cA(\nu,p)\subseteq\cA(\nu,p')\subseteq \cD$ for $p\leq p'$. Are there conditions on $\nu$ that guarantee that $\cA(\nu,p)$ is strictly smaller than $\cA(\nu,p')$ for all $p<p'$? According to Theorem \ref{th:diamond}, the $p$-shape and the $p'$-shape coincide when $\nu$ is heavy-tailed since the asymptotic growth rate in both cases is maximal. Are there cases when $\cA(\nu,p')$ is strictly smaller than $\cD$, and we still have $\cA(\nu,p)=\cA(\nu,p')$ for $p<p'$ sufficiently close to $p'$?

\noindent \textbf{The tie-breaker.} All our results apply for any tie-breaking rule. An unfair tie-breaker can hence not ruin the possibility of coexistence when $p_1=p_2$. One can still ask if an unfair tie-breaker can make coexistence possible in a situation where it is not possible with a fair tie-breaker (by giving an advantage to the weaker type). We think that the answer is no. However the tie-breaker could potentially influence the geometry of the sets of sites discovered by the respective types and the properties of the boundaries between them.

\noindent \textbf{Collective laziness.} An alternative way of modeling the delays is to toss one single coin in each time step deciding if the type 1 particles move or not, that is, with probability $p_1$ all type 1 particles move and with probability $1-p_1$ they all stay where they are. Similarly, one single coin toss determines if the type 2 particles move or not. The intuition behind our conjecture that coexistence is possible if and only if the one-type shapes coincide is fairly general and apply also to this version of the model. One might hence guess that it is qualitatively similar to our model, where the delays of the particles are independent. Note however that, with collective laziness the one-type shape theorem follows immediately from the version without laziness from a simple time-scaling argument and we obtain in this case that $\cA(\nu,p_2)=\frac{p_2}{p_1}\cA(\nu,p_1)$ for all $\nu$, so that type 2 gives rise to a strictly smaller shape when $p_2<p_1$. If indeed coexistence is possible if and only if the one-type shapes coincide for both versions of the model, then there will be choices for $(p_1,p_2)$ for which the types can coexist with independent laziness but not with collective laziness.

\noindent \textbf{Continuous time.} The frog model is traditionally studied in discrete time, but it could of course also be defined in continuous time by letting the particles move according to independent simple random walks in continuous time. This has been done in \cite{combustion}, where a shape theorem is proved for the one particle per site initial configuration $\eta(x)\equiv 1$. A two-type version of such a model would be obtained by letting type 1 particles jump with rate 1 and type 2 particles with rate $\beta<1$. It would have the advantage that no tie-breaker is needed, since particles will almost surely not jump simultaneously. For $\eta(x)\equiv 1$ we conjecture that coexistence is possible if and only if $\beta=1$. For other initial distributions, one would first have to establish a shape theorem. In contrast to the discrete case, this might require conditions on the initial distribution. Could the growth be superlinear in time if $\eta(x)$ has a very heavy tail? For $\nu$ that do give rise to a bounded shape, it follows by time-scaling that the shape at rate $\beta<1$ is strictly smaller than the rate 1 shape and we therefore conjecture that coexistence is possible if and only if $\beta=1$.

\subsection{Related work}\label{sec:related}

Competition models on $\Zd$ have been an active research area the last decades. A two-type version of the Richardson model was introduced in \cite{HP1}, with two types competing to invade the sites of the $\Zd$-lattice. The growth is driven by exponential passage times on the edges with potentially different intensities for the types, and the conjecture is that the types can grow to occupy infinitely many sites simultaneously if and only if they spread with the same intensity. The if-direction was proved in \cite{HP1} for $d=2$ and independently in \cite{GM_coex} and \cite{Hoff_coex} for $d\geq 2$. The only-if direction is not proved, but partial results can be found in \cite{GM_invisible,HP2}.

A variation of the two-type Richardson model, where a site that has at least two neighbors of a given type is immediately occupied by that type, is studied in \cite{urns}. Another variation was recently introduced in \cite{VA}. A type 1 process there starts from the origin and each time it reaches a new site, with some probability instead a type 2 process starts at this site. We also mention the multi-type
contact process, introduced in \cite{Neu} and further studied e.g.\ in \cite{MV, MPV}. There sites can recover and become susceptible again, and the focus is on properties of stationary measures.

In the above models, the type is associated with the sites. The frog model however is driven by moving particles, and the type in our two-type version is associated with the particles. A model that is related to the frog model is obtained by letting all particles move, that is, there are no sleeping particles but all particles start moving according to independent random walks at time 0. This model is technically considerably more challenging to analyze and has been studied e.g.\ in \cite{KS_05,KS_08}. A competition version was studied in \cite{RW_competition}. There the two types both move at rate 1, type 1 starting from a single site and type 2 from some infinite set of sites $S$, and a particle changes type if a particle of the other type jumps onto it. The main result is a condition on the set $S$ that determines when type 1 has a chance of surviving. In this context we also mention \cite{KS_03}. The model studied there is not a competition model, but also deals with the evolution of two interacting types.

\section{Proof of Proposition \ref{pr:both_can_win}}\label{sec:prop_both}

In this section we prove Proposition \ref{pr:both_can_win}. A key observation is that a given particle will almost surely discover only finitely many sites, implying that it will activate finitely many other particles. This follows from the fact that the  distance of a random walk from its starting point after $n$ steps scales like $\sqrt{n}$, while the set of discovered sites in the frog model grows linearly in $n$ according to the shape theorem. The proposition follows from this in combination with coupling arguments.

Consider a (possibly lazy) simple symmetric random walk $S_n$ on $\Zd$, starting at the origin. It is well known that the distance to the origin scales like $\sqrt{n}$. Let $\cD_r=\{x\in\RR^d:\;\lVert x\rVert_1\leq r\}$. The following result quantifies the probabilities of moderate deviations for the walk.

\begin{lemma}\label{le:SRW}
For any $\varepsilon\in(0,1/2)$, there exists a constant $\gamma>0$ such that, for all $n$:
\begin{equation}\label{eq:SRW}
\PP(S_n\in\cD_{n^{1-\varepsilon}})\geq 1-\exp\{-\gamma n^{1-2\varepsilon}\}.
\end{equation}
\end{lemma}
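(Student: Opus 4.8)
The plan is to prove the equivalent statement that $\PP(\|S_n\|_1 > n^{1-\varepsilon}) \le \exp\{-\gamma n^{1-2\varepsilon}\}$ via a standard sub-Gaussian concentration bound for sums of independent bounded increments, combined with a union bound over coordinates. Write $S_n = \sum_{k=1}^n X_k$, where the increments $X_k$ are i.i.d.\ and, for a lazy walk with jump probability $p$, take the value $\0$ with probability $1-p$ and each of $\pm e_1,\dots,\pm e_d$ with probability $p/(2d)$. Denoting by $S_n^{(i)}$ the $i$-th coordinate of $S_n$, each $S_n^{(i)} = \sum_{k=1}^n X_k^{(i)}$ is a sum of i.i.d.\ mean-zero variables supported on $\{-1,0,1\}$. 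Since $\{\|S_n\|_1 > n^{1-\varepsilon}\} \subseteq \bigcup_{i=1}^d \{|S_n^{(i)}| > n^{1-\varepsilon}/d\}$, a union bound reduces the whole estimate to a one-dimensional tail bound for each coordinate.

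First I would control a single coordinate by the exponential Markov (Chernoff) method. By Hoeffding's lemma, a mean-zero variable taking values in $[-1,1]$ satisfies $\E[e^{\lambda X_k^{(i)}}] \le e^{\lambda^2/2}$ for all $\lambda \in \RR$, so by independence $\E[e^{\lambda S_n^{(i)}}] \le e^{n\lambda^2/2}$. Applying Markov's inequality and optimizing over $\lambda$ (taking $\lambda = t/n$) yields $\PP(S_n^{(i)} \ge t) \le \exp\{-t^2/(2n)\}$, and symmetrically for the lower tail, so $\PP(|S_n^{(i)}| > t) \le 2\exp\{-t^2/(2n)\}$. With the threshold $t = n^{1-\varepsilon}/d$ this gives $\PP(|S_n^{(i)}| > n^{1-\varepsilon}/d) \le 2\exp\{-n^{1-2\varepsilon}/(2d^2)\}$, and summing over the $d$ coordinates produces $\PP(\|S_n\|_1 > n^{1-\varepsilon}) \le 2d\,\exp\{-n^{1-2\varepsilon}/(2d^2)\}$.

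It then remains to absorb the constant prefactor $2d$ into the exponent uniformly in $n$, which is where a little care is needed but no real difficulty lies. For $n$ large enough, any $\gamma < 1/(2d^2)$ works, since $\ln(2d) \le (1/(2d^2) - \gamma)\,n^{1-2\varepsilon}$ eventually; for the finitely many remaining small $n$ the bound is immediate once one notes that $\PP(\|S_n\|_1 \le n^{1-\varepsilon}) > 0$ for every $n$ (the walk can stay within $L^1$-distance $1$ of the origin, and $n^{1-\varepsilon} \ge 1$), so the complementary probability is strictly below $1$ and can be dominated by $\exp\{-\gamma n^{1-2\varepsilon}\}$ after shrinking $\gamma$. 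The only genuinely model-specific point to verify is that laziness does not affect the argument: introducing a positive holding probability only rescales the variance of the increments while keeping them mean-zero and supported in $[-1,1]$, so the Hoeffding bound $\E[e^{\lambda X_k^{(i)}}] \le e^{\lambda^2/2}$, and hence the entire estimate, holds verbatim and uniformly in $p$.
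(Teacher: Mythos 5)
Your proof is correct, and it takes a genuinely more self-contained route than the paper. The skeleton is the same in both arguments: project onto coordinates, use a union bound, and note that a per-coordinate threshold of order $n^{1-\varepsilon}/d$ (your version) or a small cube (the paper's version) fits inside the $L^1$-ball $\cD_{n^{1-\varepsilon}}$. The difference lies in the one-dimensional estimate: the paper imports it as a black box from the moderate-deviations result of Eichelsbacher and L\"owe \cite{moderate}, whereas you prove it from scratch with Hoeffding's lemma and the Chernoff method. This works because in the regime $\varepsilon\in(0,1/2)$ the deviation $n^{1-\varepsilon}$ is well above the diffusive scale $\sqrt{n}$, so the crude sub-Gaussian bound $\exp\{-n^{1-2\varepsilon}/(2d^2)\}$ already has the correct order of exponent, and the lemma does not care about the value of $\gamma$. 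Your approach buys two things: it makes the statement elementary rather than citation-dependent, and it makes uniformity in the laziness parameter $p$ transparent, since Hoeffding only uses that the coordinate increments are mean zero and supported in $[-1,1]$ (the paper instead has to apply its cited one-dimensional bound to the coordinate process, which is itself a lazy walk with holding probability $1-p/d$, a point it leaves implicit). You are also more careful than the paper on a minor but real point: absorbing the prefactor $2d$ from the union bound into the exponent uniformly in $n$, including the finitely many small $n$, which the paper glosses over entirely.
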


\begin{proof}
For a one-dimensional walk, it is proved in \cite{moderate} that $\PP(S_n\not\in \cD_{cn^{1-\varepsilon}})\leq \exp\{-\gamma n^{1-2\varepsilon}\}$
for $\varepsilon\in(0,1/2)$, all $c>0$ and some $\gamma>0$. This immediately gives the bound for $d=1$. For $d\geq 2$, the probability of a given jump being along the $x_k$-direction ($k=1,\ldots,d$) is $1/d$. The displacement in a given direction can hence be controlled by the one-dimensional bound, and by a union bound we obtain \eqref{eq:SRW} for the $d$-dimensional walk with $\cD_{n^{1-\varepsilon}}$ replaced by a cube with side length $2cn^{1-\varepsilon}$ centered at the origin. The desired bound follows from this by choosing $c$ small such that this cube is contained in $\cD_{n^{1-\varepsilon}}$.
\end{proof}

We now combine this with the shape theorem to conclude that any given particle discovers only finitely many sites.

\begin{lemma}\label{le:finite}
For any initial distribution, the number of sites discovered by a given particle in the one-type or two-type frog model is almost surely finite.
\end{lemma}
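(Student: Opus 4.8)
The plan is to exploit the mismatch between the diffusive $\sqrt{n}$ spreading of a single walk and the linear growth of the discovered region: after enough time the tagged particle is buried deep inside the already-discovered set and can therefore no longer be the first to reach any site. Fix one particle, say the one at $x$ with underlying walk $(S^{x,j}_n)_n$, and fix $\varepsilon\in(0,1/2)$. Lemma \ref{le:SRW} gives $\PP(S^{x,j}_n\notin\cD_{n^{1-\varepsilon}})\le\exp\{-\gamma n^{1-2\varepsilon}\}$, and since $1-2\varepsilon>0$ these probabilities are summable; by Borel--Cantelli there is an almost surely finite random $N_0$ with $S^{x,j}_n\in\cD_{n^{1-\varepsilon}}$ for all $n\ge N_0$. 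Because the lazy walk makes at most $t$ jumps by time $t$, and its first $N_0$ positions lie in a fixed bounded set, it follows that for all large $t$ the particle sits within $\|\cdot\|_1$-distance $t^{1-\varepsilon}$ of $x$, hence within $t^{1-\varepsilon}+\|x\|_1$ of the origin.

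Next I would produce a matching linear lower bound on the discovered region. Set $p=\min(p_1,p_2)=p_2$ and consider the one-type lazy process started from the origin with parameter $p$, whose discovered set is $\xi_n(p)$ as in Theorem \ref{th:shape}. Conditional on $\eta(\0)\ge 1$, the shape theorem yields a symmetric convex body $\cA$ with non-empty interior, so that $\cA\supseteq\cD_{c'}$ for some $c'>0$; combined with $(1-\varepsilon)\cA\subseteq\bar{\xi}_n(p)/n$ this gives a constant $c>0$ with $\{y:\|y\|_1\le cn\}\subseteq\xi_n(p)$ for all large $n$, almost surely. The key remaining point, and where I expect the main (though routine) work to lie, is to transfer this bound to the two-type process, i.e.\ to show that under the common randomness $\Pi=(\nu,S,L)$ the set of \emph{all} sites discovered in the two-type process contains $\xi_n(p)$ at every time $n$. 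Using that the delay variables $L$ make a particle with jump probability $p'\ge p$ jump no later than one with probability $p$, one argues by induction on time that every particle is activated in the two-type process no later than in the one-type-$p$ process, and consequently reaches each point of its trajectory no later, so no site is discovered later in the two-type process. Since the two-type process moreover starts from the superset $\{\0,z\}$ of active sites, its total discovered set dominates $\xi_n(p)$ as claimed.

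Finally I would combine the two estimates. For all large $t$ the tagged particle and its $2d$ neighbours lie within $\|\cdot\|_1$-distance $t^{1-\varepsilon}+\|x\|_1+1$ of the origin, while every site within distance $ct/2$ of the origin has already been discovered by time $t/2$. Since $t^{1-\varepsilon}+\|x\|_1+1<ct/2$ for all large $t$, there is an almost surely finite random time $T^{\ast}$ after which every site the particle occupies, and every neighbour to which it might move, has already been discovered strictly earlier. Hence the particle discovers no new site after $T^{\ast}$, and since it visits at most finitely many sites during $[0,T^{\ast}]$, it discovers only finitely many sites in total. The same argument applies verbatim to the one-type model, which proves the lemma.
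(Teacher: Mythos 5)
Your core argument is the same as the paper's: Lemma \ref{le:SRW} plus Borel--Cantelli confines the tagged particle to $\cD_{t^{1-\varepsilon}}$, while the shape theorem makes the discovered region grow linearly, so the particle is eventually buried inside already-discovered territory and can never again be first to a site. Where you differ is the comparison step. The paper dominates the \emph{tagged particle's own discoveries} by those it makes in an auxiliary one-type process started with only the particles at its site $x$ active (the $x$-particles using $p_1$, all others $p_2$; Remark \ref{rem1} extends the shape argument to that modified process). You instead lower-bound the \emph{total} discovered set of the two-type process by $\xi_n(p_2)$, the one-type process with the smaller jump probability started from $\0$, via the threshold coupling in $L$; this is exactly the monotonicity the paper asserts in Section \ref{sec:definition}, and your induction sketch (raising jump probabilities and adding initially active particles can only make every activation and every arrival earlier) is sound, including in the presence of an arbitrary tie-breaker, since the tie-breaker affects types but not activation times. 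Your route avoids both the per-particle domination claim and the modified shape theorem of Remark \ref{rem1}; the paper's route has the advantage of centering the linearly growing cluster at the particle's own site.

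That last point is where your proof has a genuine, though easily repaired, gap: the lemma is claimed for \emph{any} initial distribution, including those with $\PP(\eta(\0)=0)>0$, but your linear lower bound invokes the shape theorem for the cluster grown from $\0$, conditional on $\eta(\0)\geq 1$. On the event $\eta(\0)=0$ your bound is vacuous: $\xi_n(p_2)$ never grows, yet the two-type process may still be infinite (seeded from $z$) and the tagged particle may still be activated and wander. You need a case distinction: if $\eta(\0)\geq 1$, run your argument as written; if $\eta(\0)=0$ and $\eta(z)\geq 1$, the two-type process coincides with a one-type process seeded at $z$, and the same argument applies with all balls centered at $z$; if both seeds are empty, no particle is ever activated and the claim is trivial. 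The paper sidesteps this issue entirely because its comparison process is started at the tagged particle's own site $x$, which is non-empty by the very existence of that particle.
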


\begin{proof}
We show the statement for the origin particles in a one-type model, and then explain how this gives the general statement. Consider one of the initially activated particles at the origin in a one-type model. By Lemma \ref{le:SRW} and the Borel-Cantelli lemma, the position of the particle will almost surely be contained in $\cD_{n^{3/4}}$ for large $n$. However, by Theorem \ref{th:shape}, the one-type process grows linearly in $n$, and gives rise to a deterministic shape $\cA$ on the scale $n^{-1}$. The shape $\cA$ is non-empty and convex, implying that $\cA\supset \cD_\delta$ for some small $\delta>0$. Hence almost surely $\cD_{n\delta/2}\subset \xi_n$ for large $n$. It follows that the origin particle will almost surely not discover any new sites for large $n$.

The number of sites discovered by a particle with initial location $x\neq \0$ in the one-type process is dominated by the number of sites discovered by a particle from $x$ in a one-type process started with only the particles at $x$ activated. This gives the statement for any given particle in the one-type process. In the two-type process, the number of sites discovered by a given particle with initial location $x$ is dominated by the number of sites discovered by a particle from $x$ in a one-type process (constructed based on the same vector $\Pi$) started with only the particles at $x$ activated and where the $x$-particles move according to trajectories with the larger jump probability $p_1$ while all other particles use the smaller jump probability $p_2$. It follows from the same argument as above that this number is almost surely finite.
\end{proof}

\begin{remark}\label{rem1}
As noted in the proof, the same argument yields the same conclusion for a given particle also in a slightly modified one-type process where a finite number of particles move according to random walks with jump probability $p_1$ while the rest of the particles move according to random walks with jump probability $p_2<p_1$.
\end{remark}

\begin{proof}[Proof of Proposition \ref{pr:both_can_win}]
We prove that $\PP_{\0,z}(G_1^c\cap G_2)>0$, that is, the (possibly) weaker type 2 has a strictly positive probability of winning. That $\PP_{\0,z}(G_1\cap G_2^c)>0$ is proved similarly. We first treat the case when $p_1<1$ so that both types are lazy, and then describe how the argument can be generalized to the case when $p_1$ (and possibly also $p_2$) is equal to 1.

Consider a modified one-type process started with the particles at $\0$ and $z$ active at time 0, and where the particles starting at $\0$ move according to random walks with jump probability $p_1$, while all other particles (including those activated by the particles at $\0$) move according to random walks with jump probability $p_2$. The process is generated using the random objects in the vector $\Pi=(\nu,S,L)$, as described in Section \ref{sec:definition}. We let $\Pi^{\sss \rm{one}}_n$ denote the state of this process after $n$ steps, including the location and origin of all particles.

By Remark \ref{rem1}, the particles at $\0$ discover an almost surely finite number of sites in the above one-type process. With $N$ denoting the last time in the process a particle starting at the origin discovers a new site, we can hence pick $m$ such that $\PP(N\leq m)\geq 1/2$. Note that the set of discovered sites after $m$ steps is contained in $\cD_m=\{x\in\RR^d:\|x\|_1\leq m\}$ and write $v_m$ for the number of sites in $\cD_m$.

Now consider a two-type process started with the particles at $\0$ and $z$ active of type 1 and type 2, respectively. We will define coupled random walks $\hat{S}$ and delay variables $\hat{L}$ with the same distribution as $S$ and $L$ such that, if the two-type process is generated by $\hat{\Pi}=(\nu,\hat{S},\hat{L})$, then with positive probability the only particles that become activated by type 1 are those at $\0$. Essentially, the idea is to let the $\0$-particles stay put while type 2 progresses beyond the set of discovered sites in $\Pi^{\sss \rm{one}}_m$, preventing type 1 from discovering new sites if the $\0$-particles do not do so in the one-type process. To this end, the delay variables for the $\0$-particles before their first jump are generated independently for $k=1,\ldots,2v_m+m$, that is, for all $i=1,\ldots,\eta(x)$, we let
$$
\hat{L}_{0,k}^{\0,i}=\left\{
\begin{array}{ll}
\tilde{L}_{0,k}^{\0,i} & k=1,\ldots,2v_m+m;\\
L_{0,k-2v_m-m}^{\0,i} & k>2v_m+m,
  \end{array}
            \right.
$$
where $\{\tilde{L}_{0,k}^{\0,i}\}$ are i.i.d.\ uniform on $[0,1]$ and independent of $\{L_{0,k}^{\0,i}\}$. Furthermore, the variables controlling whether or not a given particle at $z$, say $(z,1)$, will jump in the time step after its $j$th jump are generated independently for $j=0,\ldots,2v_m-1$, that is,
$$
\hat{L}_{j,k}^{z,1}=\left\{
\begin{array}{ll}
\tilde{L}_{j,k}^{z,1} & j=0,\ldots,2v_m-1 \mbox{ and }k=1;\\
L_{j,k}^{z,1} & \mbox{otherwise},
  \end{array}
            \right.
$$
where $\{\tilde{L}_{j,k}^{z,1}\}$ are independent of $\{L_{j,k}^{z,1}\}$ with the same distribution. Also the jumps $j=1,\ldots,2v_m$ for the particle $(z,1)$ are generated by an independent random walk, that is,
$$
\hat{S}_j^{z,1}=\left\{
\begin{array}{ll}
\tilde{S}_j^{z,1} & j=1,\ldots,2v_m;\\
S_{j-2v_m}^{z,1} & j>2v_m,
  \end{array}
            \right.
$$
where $(\tilde{S}_j^{z,1})$ is independent of $(S_j^{z,1})$. All other particles move according to the same random walk trajectories as in $S$ and use the variables in $L$ to control their jumps. Note that $\hat{\Pi}$ has the same distribution as $\Pi$.

We now define two events that will guarantee that, if $N\leq m$, then type 2 wins in the two-type process based on $\hat{\Pi}$. First let $\hat{A}_0$ denote the event that the type 1 particles at the origin stay put in the first $2v_m+m$ time steps. Hence, on $\hat{A}_0$, the only particles that are type 1 at time $2v_m+m$ in the process are those at $\0$. As for type 2, let $\hat{A}_z$ denote the event that, in the time interval $[1,2v_m]$, the type 2 particle $(z,1)$ jumps between the sites in $\cD_m$, making one jump in each time step, in such a way that all sites in $\cD_m$ are visited at least once and at time $2v_m$ the particle $(z,1)$ returns to $z$. The particles in $\cD_m$ that are then activated by type 2 immediately start moving according to the same dynamics as in $\Pi$. Any other type 2 particles at $z$ and the particles activated by them develop in the same way as in $\Pi$. This means that all discovered sites in $\Pi^{\sss \rm{one}}_m$ are discovered at time $2v_m$ in the two-type process, and all sites except $\0$ are discovered by type 2. Finally, in the time interval $[2v_m+1,2v_m+m)$, the growth of type 2 continues based on the same random objects as in $\Pi$.

To summarize, on the event $\hat{A}_0\cap \hat{A}_z$, all particles that were activated in $\Pi^{\sss \rm{one}}_m$ are activated at time $2v_m+m$ in the two-type process based on $\hat{\Pi}$. Furthermore, all particles except those at $\0$ are activated by type 2 and have gotten at least as far along their random walk trajectories as in $\Pi^{\sss \rm{one}}_m$. Now assume that $N\leq m$, that is, the $\0$-particles do not discover any new sites after time $m$ in the modified one-type process. Then, when the $\0$-particles start moving according to the same random walks as in $\Pi$ at time $2v_m+m$ in the two-type process, they will not discover any new sites. Hence
$$
\PP_{\0,z}(G_1^c\cap G_2)\geq \PP_{\0,z}(\hat{A}_0\cap\hat{A}_z|N\leq m)\PP_{\0,z}(N\leq m).
$$
The events $\hat{A}_0$ and $\hat{A}_z$ are defined in terms of finitely many random objects that are independent of the objects in $\Pi$, implying that $\PP(\hat{A}_0\cap \hat{A}_z|N\leq m)=\PP(\hat{A}_0\cap \hat{A}_z)>0$. Furthermore $\PP(N\leq m)\geq 1/2$ by the choice of $m$. We conclude that $\PP_{\0,z}(G_1^c\cap G_2)>0$, as desired.

When $p_1=1$, so that type 1 (and possibly also type 2) is not lazy and thereby can not stay put, the argument is modified as follows. Pick a neighboring site of the origin, say $\1$, and assume without loss of generality that $z\neq \1$ and that $z$ is not a neighbor of $\1$. Extend the definition of $N$ to include also any particles at $\1$ so that no particle from $\0$ or $\1$ discovers a new site after time $N$ in the one-type process. Then let the type 1 particles from $\0$ jump back and forth between $\0$ and $\1$ while type 2 progresses as described above. Any particles at $\1$ that are activated by type 1 jump back and forth between $\1$ and $\0$. This is achieved by modifying the random walks associated with the particles at $\0$ and $\1$ in the beginning of the time course. We then arrive at a configuration where type 2 has progressed beyond its state at time $m$ in the one-type process and where the type 1 particles from $\0$ and $\1$ are thereby prevented from discovering any new sites if they do not do so in the one-type process.
\end{proof}

\section{Proof of Proposition \ref{pr:initial}}\label{sec:prop_initial}

We proceed with proving that the choice of the starting site $z$ for type 2 is irrelevant for the possibility of mutual infinite growth for $p_1,p_2\in(0,1)$.

\begin{proof}
To verify the claim, we will use a technique commonly referred to as ``sticky coupling'': Two copies of the process, started from different sites, evolve side by side until they enter the same state. From that time point on, the same random variables are used to generate the further evolution of both copies, preventing them from separating thereafter. In contrast to the standard argument of this kind, in our case the two copies will not evolve independently until they meet, but the second copy will be gradually aligned with the first one until they finally reach the same state. The first copy will be started from sites where we know that coexistence is possible, and the second copy from sites where we wish to show that coexistence is possible.

First assume that $\PP_{\0,z}(G_1\cap G_2)>0$. In the first copy, type 1 then starts from  $\0$ and type 2 from $z$. Fix a shortest path $\Gamma$ from $\mathbf{0}$ to $z$ and label its sites according to the following rule: a site $v\in\Gamma$ is assigned label $i$ ($i=1,2$) if $\eta(v)>0$ and its particles are activated by type $i$, and label $0$ if $\eta(v)=0$; see Figure \ref{fig:gamma}. Let $c(\Gamma)\in\{0,1,2\}^{\Gamma}$ be the (random) string of labels and let $M$ denote the time when all sites on $\Gamma$ have been discovered.

\begin{figure}
\centering
\includegraphics[scale=1.2]{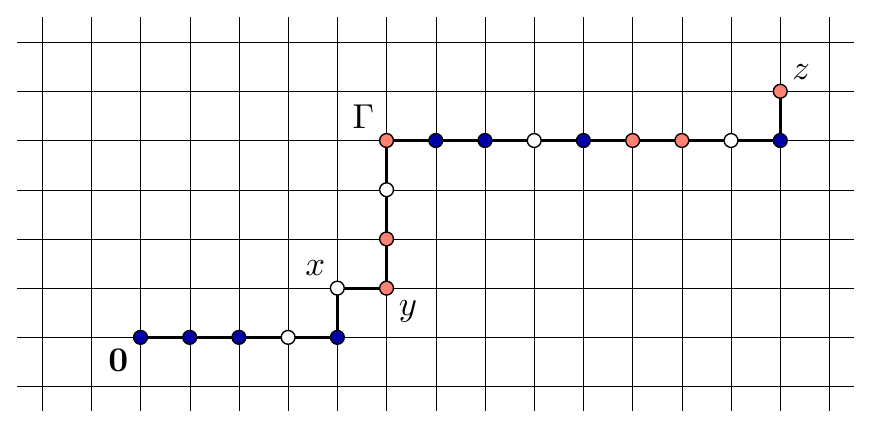}
\caption{Initially non-empty sites that are discovered by type 1 and type 2, respectively, are labeled 1 (blue) and 2 (red), and initially empty sites are labeled 0 (white).
\label{fig:gamma}}
\end{figure}	

Since $\{0,1,2\}^{\Gamma}$ is finite and $M$ is finite almost surely, our assumption implies that, for some $\gamma\in\{0,1,2\}^{\Gamma}$ and $m\in\N$ sufficiently large, the event  \[C_{\gamma,m}:=G_1\cap G_2 \cap \{M\leq m\}\cap\{c(\Gamma)=\gamma\}\] has positive probability. Let $y$ be the first site on $\mathbf{0}\stackrel{\Gamma}{\longrightarrow} z$ with label 2 (in $\gamma$) and $x$ its predecessor.

We now define a second copy of the competition process, with type 1 started in $x$ and type 2 started in $y$. Conditioned on $C_{\gamma,m}$, the second copy will reach the same state as the first copy in finite time with positive probability. To guarantee that there is a non-zero number of particles at $x$, we first change the initial configuration slightly by interchanging the number of particles at $\mathbf{0}$ and $x$. The process then starts with one type 1 particle traversing $\Gamma$ from $x$ to $\mathbf{0}$ and back. Next, one particle of each type, starting from $x$ and $y$, respectively, move along $\Gamma$ towards $z$ according to the following rule: The particle of type $i$ moves forward if either it is trailing or the label of the next site (attributed by $\gamma$) is in $\{0,i\}$, otherwise the type $i$ particle moves backwards. In this way, once the type 2 particle reaches $z$, all sites on $\Gamma$ have been activated by the type prescribed in $\gamma$. Finally, both particles return along $\Gamma$ to their initial position and the (type 1) particles placed at $\mathbf{0}$ and $x$ switch places. During all this time, no other activated particle than the ones specified moves. Note that the location of the particles now exactly corresponds to the starting configuration $\{\eta(v)\}_{v\in\Zd}$ of the first copy, however, all sites on $\Gamma$ have been activated.

At this point, each particle in the second copy is paired up one-to-one with a particle in the first copy, in such a way that the current position of the former and the initial position of the latter coincide. To couple the copies, we proceed as follows: All particles in the second copy mimic the moves of their twin in the first copy and, until the twin gets activated, the particles on $\Gamma\setminus\{\mathbf{0},z\}$ stay put by being lazy. Once all sites on $\Gamma$ are activated in the second copy, both copies are in the exact same state and further evolve identically. Since we manipulated only finitely many sites, particles and moves in the second copy, the coupling shows that $\PP_{\mathbf{0},z}(C_{\gamma,m})>0$ implies $\PP_{x,y}(G_1\cap G_2)>0$ and hence $\PP_{\mathbf{0},\mathbf{1}}(G_1\cap G_2)>0$ by rotation and translation invariance.

Now assume that $\PP_{\0,\1}(G_1\cap G_2)>0$. To show that $\PP_{\0,z}(G_1\cap G_2)>0$, we proceed in a similar fashion: In the first copy, type 1 is now started from $\mathbf{0}$ and type 2 from $\mathbf{1}$. For $n\in\N$, let $M_n$ denote the time when all sites in $\cD_n\cap \Zd$ have been discovered. Furthermore, for $v\in\Zd$, let $c(v)\in\{0,1,2\}$ denote the label attributed to $v$ according to the same rule as above, that is, $c(v)=0$ if $\eta(v)=0$ and $c(v)=i$ ($i=1,2$) if particles at $v$ are activated by type $i$. Since $\PP_{\0,\1}(G_1\cap G_2)>0$, for fixed $z$ the probability that $\cD_n$ contains at least $\lVert z\rVert_1$ sites with label 2 tends to $1$ as $n\to\infty$. Hence, as in the first part, we can choose first $n\geq\lVert z\rVert_1$, then $m$ big enough, such that for some $\lambda\in \{\text{0,1,2}\}^{\cD_n}$ and a collection of sites $\{y_1,\dots,y_{\lVert z\rVert_1}\}\subseteq\cD_n$, the event
\[C_{\lambda,m}:=G_1\cap G_2 \cap \big\{M_n\leq m\big\}\cap\big\{c(\cD_n)=\lambda\big\}\cap\Bigg(\bigcap_{k=1}^{\lVert z\rVert_1}\{c(y_k)=2\}\Bigg)\]
has positive probability.

Now consider a second copy started with type 1 in $\mathbf{0}$ and type 2 in $z$. In order to pair it up with the first copy (started with type 1 in $\0$ and type 2 in $\1$), we would first like a type 2 particle from $z$ to activate the site $\mathbf{1}$. This however, potentially causes incorrect labels on its way, which forces us to make some extra effort: Fix a shortest path $\Gamma:\ \mathbf{1}\to z$ and observe that $\Gamma\subseteq\cD_n$. Set $x_1=z$ and let $\{x_2,\dots,x_k\}$ be the sites on $\Gamma\setminus\{z\}$ that have label 1 in $\lambda$. Again, we alter the initial placement of particles in the second copy, this time by interchanging the numbers of particles initially placed at sites $x_i$ and $y_i$, for all $1\leq i\leq k$.
	
We now let the second copy evolve as follows: First one type 2 particle moves from $z$ to $\mathbf{0}$ via $\Gamma$ and the edge $\langle \mathbf{0},\mathbf{1}\rangle$. This type 2 particle then explores all sites of $\cD_n$ together with a type 1 particle from $\0$ in such a way that every site $v\in\cD_n\cap \Zd$ that has label $i$ in $\lambda$ is activated by the type $i$ particle, with the exception of the sites $y_1,\dots, y_k$, which are activated by the type 1 particle instead. All other activated particles (besides the pair activating $\cD_n$) idle by being lazy. Once all sites in $\cD_n$ have been discovered, the activating pair moves back to their initial positions. Furthermore, all (type 2) particles now placed at $x_i$ move to $y_i$ and all (type 1) particles placed at $y_i$ move to $x_i$, $1\leq i\leq k$, no particle ever leaving $\cD_n$. In this way, we have once more established a specified activation pattern (here $\lambda$) which occurs with positive probability in the first copy, given coexistence of both types, and moved all particles activated in the second copy to correspond to the initial configuration of the first copy.
	
Now we couple the copies as before: Activated particles in the second copy idle until their twin in the first copy gets activated and then mimic its moves. Conditioned on $C_{\lambda,m}$ for the first copy, after a finite time the two copies are in the exact same state, verifying that $\PP_{\0,z}(G_1\cap G_2)>0$.
\end{proof}

The above argument does not immediately extend to the case with $p_1=1$, since the particles can then not stay put. However, it turns out that the argument for one of the implications can be modified slightly so that it partially extends to the case with non-lazy particles. This will be important in obtaining Theorem \ref{th:coex} for $p_1=p_2=1$.

\begin{lemma}\label{le:odd_z}
Assume that either (i) $p_2<p_1=1$ or (ii) $p_1=p_2=1$ and $\|z\|_1$ is odd. Then $\PP_{\0,z}(G_1\cap G_2)>0$ implies that $\PP_{\0,\1}(G_1\cap G_2)>0$ for any initial distribution $\nu$.
\end{lemma}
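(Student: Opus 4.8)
The plan is to adapt the first implication in the proof of Proposition \ref{pr:initial} (the direction ``$\PP_{\0,z}(G_1\cap G_2)>0\Rightarrow\PP_{\0,\1}(G_1\cap G_2)>0$''), which is exactly what the lemma asserts. As there, I would condition on a positive-probability event $C_{\gamma,m}=G_1\cap G_2\cap\{M\le m\}\cap\{c(\Gamma)=\gamma\}$ for a first copy started with type 1 at $\0$ and type 2 at $z$ along a fixed shortest path $\Gamma$ from $\0$ to $z$, let $y$ be the first label-$2$ site of $\gamma$ and $x$ its predecessor, and aim to show $\PP_{x,y}(G_1\cap G_2)>0$; since $x,y$ are neighbours, $\PP_{\0,\1}(G_1\cap G_2)>0$ then follows by translation and rotation invariance. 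The setup phase that recreates the labelling $\gamma$ on $\Gamma$ in a second copy started from $(x,y)$ uses single particles traversing $\Gamma$, and these traversals are harmless for non-lazy particles, since a traversing particle simply keeps moving at every step. The only genuinely lazy ingredients of the original argument are (a) the idling of all non-designated activated particles during the setup and (b) the sticky-coupling step, in which each already-activated $\Gamma$-particle of the second copy must wait at its home site until its first-copy twin is activated and only then start mimicking its moves.

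To remove the reliance on laziness I would let a non-lazy particle ``wait'' at a site $v$ by oscillating back and forth along an edge of $\Gamma$ incident to $v$; since both endpoints are already discovered during the setup, such oscillation never creates a spurious discovery and never alters an existing label. The price is that an oscillating particle revisits $v$ only at times of one fixed parity. The tool that controls this parity is the space-time invariant of a non-lazy type: an active type 1 particle located at $v$ at time $t$ satisfies $\|v\|_1+t\equiv 0\pmod 2$ and, if type 2 is also non-lazy, an active type 2 particle satisfies $\|v\|_1+t\equiv\|z\|_1\pmod 2$, because all particles of a given type descend from a single activation point. Consequently the time $\tau_v$ at which a $\Gamma$-site $v$ is discovered in the first copy has a parity pinned down by $\|v\|_1$ and by the discovering type. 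Introducing a global time offset $\Delta$ between the two copies, I would choose the oscillation phases so that every waiting second-copy particle sits at its home site exactly at its twin's (offset) activation time; a short computation shows that the type 1 particles force $\Delta\equiv\|x\|_1\pmod 2$ and, when type 2 is non-lazy, the type 2 particles force $\Delta\equiv\|y\|_1+\|z\|_1\pmod 2$.

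These two requirements are simultaneously satisfiable precisely when $\|x\|_1+\|y\|_1\equiv\|z\|_1\pmod 2$. Since $x$ and $y$ are neighbours, $\|x\|_1+\|y\|_1$ is odd, so in case (ii) the condition reduces to $\|z\|_1$ being odd, which is exactly the hypothesis; in case (i) type 2 is lazy, the type 2 constraint on $\Delta$ disappears, only $\Delta\equiv\|x\|_1$ remains, and every $z$ is admissible. (In case (ii) the two types moreover occupy disjoint space-time parity classes, so no site is ever discovered by both types simultaneously and the tie-breaker plays no role, which keeps the coupling clean.) With $\Delta$ fixed, the finitely many setup moves and oscillations realize, with positive probability, a configuration from which the second copy co-moves with the first and hence coexists whenever the first does. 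I expect the main obstacle to be exactly this phase-and-offset bookkeeping: one must check that every non-lazy particle required to wait can be brought to its home site at the correct parity simultaneously, that the oscillations stay within the already-discovered part of $\Gamma$ without disturbing labels or particle counts, and that the resulting coupling is genuinely measure-preserving. The parity consistency above is precisely what isolates the two cases in which this can be carried out.
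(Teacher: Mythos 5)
Your proposal is correct and shares the skeleton of the paper's own proof: both adapt the first implication of Proposition \ref{pr:initial}, replace enforced idling by letting non-lazy particles oscillate across an edge of $\Gamma$ inside already-discovered territory, and exploit the space--time parity invariant of a non-lazy type (at time $t$, all its particles sit at $L_1$-distance of parity $t$ from that type's starting site). Where you genuinely differ is in how the parity obstruction is resolved. The paper demands, for each non-lazy type, that the $L_1$-distance between its starting sites in the two copies be \emph{even}; when $\|x\|_1$ is odd this fails, and the paper repairs it with a switching trick -- the second copy is started with the $\eta(\0)$ type 1 particles at $y$ and the $\eta(y)$ type 2 particles at $x$, all crossing the edge $\langle x,y\rangle$ in the first time step, yielding $\PP_{y,x}(G_1\cap G_2)>0$ instead of $\PP_{x,y}(G_1\cap G_2)>0$. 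You instead promote the time offset $\Delta$ of the sticky coupling (left implicit in the paper) to a free design parameter, and reduce everything to the consistency of the two congruences $\Delta\equiv\|x\|_1$ and $\Delta\equiv\|y\|_1+\|z\|_1\pmod 2$, which holds vacuously in case (i) and exactly when $\|z\|_1$ is odd in case (ii), since $\|x\|_1+\|y\|_1$ is odd for neighbours. This buys a uniform argument with no case distinction on $\|x\|_1$ and no switching trick, and your parity computations are correct. To make it complete you would still need the bookkeeping you yourself flag, all of which goes through: $\Delta$ can indeed be chosen of either parity (the prescribed set-up moves occupy only an initial stretch of time, and each waiting particle's admissible home-site visiting times are exactly those the invariant allows, so only the stated congruences constrain $\Delta$); oscillation directions must be chosen pointing back toward the respective starting sites so that no undiscovered site is ever entered (the paper fixes successors on $\0\stackrel{\Gamma}{\longrightarrow}x$ and predecessors on $y\stackrel{\Gamma}{\longrightarrow}z$); and only finitely many particles and moves are prescribed, so the coupling is measure preserving and the realized event has positive probability.
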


\begin{proof}
We simply have to carefully go through the argument used in the proof of the first part of Proposition 1.2 and note that it generalizes to the case of non-lazy particles. The major problem arising is that particles of a non-lazy type cannot be forced to stay put. We will therefore assign to every site $v\in\Gamma$ a neighboring site $v'\in\Gamma$ and let non-lazy particles which in the evolution of the second copy are supposed to idle at $v$ instead jump back and forth between $v$ and $v'$: For a site $v$ on $\mathbf{0}\stackrel{\Gamma}{\longrightarrow} x$ we choose $v'$ to be its successor, and for $v$ on $y\stackrel{\Gamma}{\longrightarrow} z$ its predecessor. This way, we can still establish the prescribed activation pattern $\gamma$ on $\Gamma$ as above, since no site will be activated by particles jumping back and forth.

In the phase after $\Gamma$ has been activated and the copies are gradually coupled, a parity issue might arise in the above construction: Until its twin (initially placed at $v\in\Gamma$) in the first copy gets activated, the corresponding non-lazy particle in the second copy moves between $v$ and $v'$. For the coupling to work, all non-lazy particles jumping back and forth in the second copy have to be in their associated position $v$ once the twin gets activated at $v$ in the first copy. It is crucial to observe that, given our construction of the second copy, this is the case if and only if the $L_1$-distance between the two starting sites of a non-lazy type in the first and second copy, respectively, is even, owing to the fact that all particles of a non-lazy type at an odd (even) time will be at odd (even) $L_1$-distance to the site this type started from.

This settles the case in which only type 1 is non-lazy $(p_2<p_1=1)$ and $\lVert x\rVert_1$ is even. If $\lVert x\rVert_1$ is odd, we can fix the parity issue by starting the second copy instead with $\eta(\mathbf{0})$ active type 1 particles in $y$ and $\eta(y)$ active type 2 particles in $x$, which all move across the edge $\langle x,y\rangle$ in the first time step. Then we proceed as described above to conclude that $\PP_{y,x}(G_1\cap G_2)>0$, which again implies the claim by rotation and translation invariance.

In the case of two non-lazy types $(p_1=p_2=1)$, it is crucial for our construction that $\lVert z\rVert_1$ is odd, so that either $\lVert x\rVert_1$ being even or switching starting positions $x$ and $y$ in the second copy guarantees that for both types, the starting positions in the first and second copy share parity.
\end{proof}

\section{Proof of Theorem \ref{th:coex}}\label{sec:th_proof}

In this section we prove Theorem \ref{th:coex}. The same argument was used in \cite{GM_coex} by Garet and Marchand to prove coexistence in the two-type Richardson model, and it has later been used in  \cite{cont_coex} to prove an analogous result for a continuum model. It is also described in \cite[Section 4]{pleasures}. Here we combine it with Lemma \ref{le:finite}.

Before proceeding with the proof, we define the passage time $T(x,y)$ between two sites $x,y\in\Zd$ to be the time when the site $y$ is discovered in a one-type process started with the particles at $x$ active at time 0. Note that $T(x,y)=\infty$ if there are no particles at $x$, that is, if $\eta(x)=0$. It is not hard to see that these times are subadditive in the sense that
\begin{equation}\label{eq:subad}
T(x,y)\leq T(x,w)+T(w,y)\mbox{ for all }x,y,w\in\Zd.
\end{equation}
This is crucial in the proof of the shape theorem. Write $\n=(n,0,\ldots,0)$. Specifically, as shown in \cite{frogs_shape,frogs_shape_random}, it follows from subadditive ergodic theory that there exists a constant $\mu>0$ such that, conditional on $\eta(\0)\geq 1$,
\begin{eqnarray}\label{eq:mu}
\frac{T(\0,\n)}{n}\to \mu \mbox{ a.s. and in }L_1.
\end{eqnarray}

In order to handle initial distributions with empty sites, we will have to control the effect on $T(0,y)$ of conditioning on the presence of particles at some third site $x$. To this end we will need that, with large probability, the set of discovered sites in a one-type process contains some linearly growing ball, as stated in the below lemma. This is proved in a slightly more general formulation for non-lazy random walks in \cite[Lemma 2.5]{frogs_shape_random}. We give the general formulation and a brief explanation of why the result applies also to a lazy process in the appendix.

\begin{lemma}\label{le:liten_boll}
Consider the one-type frog model with initial distribution $\nu$ and $p\in(0,1]$. There exist constants $\tau\in(0,1)$ and $\alpha,\beta>0$ such that, conditional on $\eta(0)\geq 1$ and for all $n$:
\begin{equation}
\PP\big(\bar{\xi}_n\supseteq \cD_{\tau n}\big)\geq 1- \alpha\,\exp(-n^\beta).
\end{equation}
\end{lemma}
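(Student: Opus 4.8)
The plan is to upgrade the almost sure lower bound coming from the shape theorem (Theorem \ref{th:shape}), which only gives $\cD_{\tau n}\subseteq\bar{\xi}_n$ for large $n$ conditional on $\eta(\0)\geq 1$, to a quantitative estimate with a stretched-exponential error. The natural way to do this is a coarse-graining (renormalization) argument comparing the frog front to supercritical oriented percolation. First I would reduce the event $\{\bar{\xi}_n\not\supseteq\cD_{\tau n}\}$ by a union bound: since $\#(\cD_{\tau n}\cap\Zd)\lesssim n^d$, which grows only polynomially, it suffices to prove a single-site estimate $\PP(T(\0,x)>n)\leq \alpha'\exp(-n^{\beta})$ uniformly over $x$ with $\|x\|_1\leq\tau n$, because $n^d=e^{d\log n}\ll e^{n^\beta}$ for large $n$. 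The whole difficulty is therefore concentrated in controlling one passage time far out, and the constant $\tau$ should be chosen small enough (roughly $\tau<1/\mu$, with $\mu$ the time constant from \eqref{eq:mu}) that the typical time $\approx\mu\|x\|_1$ to reach such an $x$ is a definite fraction of $n$, leaving room for a large-deviation bound.

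For the single-site bound I would set up a relay of block crossings along a tube joining $\0$ to $x$. Partition $\Zd$ into cubes of a large but fixed side length $\ell$ and declare a cube \emph{good} if it contains at least one initially non-empty site and if, moreover, an active particle entering the cube activates that site and pushes the discovered set across to every neighbouring cube within time $K\ell$, for a suitable constant $K$. Two features make this work: with $q:=\PP(\eta(x)=0)<1$, the probability that a cube is empty is $q^{\ell^d}$, which is small once $\ell$ is large, and the required crossing can be arranged using the moderate-deviation estimate of Lemma \ref{le:SRW}; moreover, the events defining goodness of cubes that are sufficiently separated depend on disjoint families of particles, walks and delay variables in $\Pi=(\nu,S,L)$, and hence exhibit finite-range dependence. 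By a standard domination theorem of Liggett, Schonmann and Stacey type, the field of good cubes then stochastically dominates a supercritical oriented percolation whose density can be made as close to $1$ as we wish by enlarging $\ell$. Linear growth of the frog front is inherited from linear growth of the percolation cluster, and the classical exponential estimates for supercritical oriented percolation translate, after accounting for the $O(\ell)$ block-crossing times, into the desired bound on $T(\0,x)$; the passage from a full exponential at the block level to the stretched exponential $e^{-n^\beta}$ in the statement is exactly where the moderate-deviation bound of Lemma \ref{le:SRW} for the crossing walks enters.

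The main obstacle, and the step I would spend the most care on, is the handling of an arbitrary initial law $\nu$ that may place most of its mass on the empty configuration. One must guarantee that the advancing front does not stall in a sparse region, which is precisely why the block scale $\ell$ must be chosen after $\nu$ (to make good cubes dense) and why goodness is defined so that a single incoming particle suffices to re-ignite the relay. Verifying the bounded crossing time of a good cube uniformly in the entry point --- that from any face one can activate the interior non-empty site and reach all neighbouring faces in time $O(\ell)$ using only particles assigned to that cube --- is the genuinely technical point, together with checking the independence structure needed for the percolation comparison.

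Finally, the extension from non-lazy to lazy walks, which is the only new ingredient relative to \cite[Lemma 2.5]{frogs_shape_random}, I expect to be routine: a lazy particle executes each prescribed jump after a geometric($p$) waiting time, so all crossing times are inflated by a factor of order $1/p$ while remaining linear in $\ell$, and Lemma \ref{le:SRW} is already stated for lazy walks. The constants $\tau,\alpha,\beta$ then acquire a dependence on $p$, but the architecture of the argument is unchanged.
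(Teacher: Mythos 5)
Your opening reduction (union bound over the $O(n^d)$ lattice sites of $\cD_{\tau n}$, so that a uniform single-site tail bound on $T(\0,x)$ suffices) is sound, and your closing paragraph on laziness is in substance what the paper actually does: its appendix couples the lazy process to the non-lazy one via $T_p(\0,x)\leq T_1(\0,x)+\sum_{i=1}^{T_1(\0,x)}D_i$ with i.i.d.\ geometric $D_i$ and a Chernoff bound. But note that the paper never reproves the non-lazy statement at all: it quotes it as Lemma 2.5 of \cite{frogs_shape_random} (stated there for $d\geq2$, with $d=1$ by inspection of that proof), identifies the two ingredients of that proof --- the random-walk range estimate $\PP\big(|\{S_j,\,0\leq j\leq k^{1/2}\}|\geq k^{1/4}\big)\geq\beta$ and the passage-time tail bound of Theorem \ref{th:shap_est} --- and checks that both survive laziness. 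Where you propose to rebuild the hard quantitative input from scratch by renormalization, the paper imports it.

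The genuine gap is in your block construction. First, the event you call goodness of a cube --- ``an active particle entering the cube activates that site and pushes the discovered set across to every neighbouring cube within time $K\ell$'' --- is not measurable with respect to the particles, walks and delay variables attached to that cube: which particle enters, and where and when it enters, are determined by the configuration arbitrarily far away. The finite-range dependence on which the Liggett--Schonmann--Stacey domination rests therefore fails as stated. Second, the natural local repair --- demand that, from whichever of its sites is ignited first, the cube's \emph{own} particles discover prescribed sites in all neighbouring cubes within time $K\ell$ --- cannot be given probability close to $1$ by the tools you invoke. A single particle moves diffusively, so it crosses distance $\ell$ only in time of order $\ell^2$, and in $d\geq3$ it hits a prescribed target site with probability $O(\ell^{2-d})$; a crossing in time $K\ell$ must therefore come from the activation cascade, i.e.\ it is precisely a finite-volume, quantitative form of the lemma itself at scale $\ell$, uniform over the $\sim\ell^d$ possible ignition sites. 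The almost sure shape theorem (Theorem \ref{th:shape}) gives each individual crossing probability tending to $1$ but with no rate, so the union bound over ignition sites cannot be closed, and the probability of your good event cannot be pushed to $1$ without already possessing an estimate of the type being proved. This circularity (and the broken independence claim) is exactly why the published proofs proceed through tail bounds on $T(\0,x)$ built from random-walk range estimates rather than through a block-percolation comparison; a renormalization proof may well be possible, but the crux lies in the two points above, which the sketch leaves unresolved.
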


Now consider passage times based on $(\nu,S,L)$ and, for $x_1,\ldots ,x_k\in\Zd$, write $\E^{x_1,\ldots,x_k}$ for expectation conditional on $\eta(x_j)\geq 1$ for $j=1,\ldots,k$.

\begin{lemma}\label{le:non_zero} 
Consider the one-type frog model with $\PP(\eta(w)=0)>0$ but $\E[\eta(w)]<\infty$. For any $x,y\in\Zd$ we have that $\E^{\0,x}[T(\0,y)]\geq \E^{\0}[T(\0,y)]-C$, where $C$ is a positive constant that does not depend on neither $x$ nor $y$.
\end{lemma}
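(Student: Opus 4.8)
The plan is to reduce the claim to a uniform bound on the expected speed-up that a single extra occupied site can produce, and then to control this speed-up with the shape theorem and the small-ball estimate of Lemma \ref{le:liten_boll}.

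First I would remove the conditioning on $x$ by a direct decomposition. Since $\eta(x)$ is independent of $\eta(\0)$, write $q=\PP(\eta(x)\ge 1)\in(0,1)$, $a=\E^{\0,x}[T(\0,y)]$ and $b=\E^{\0}[T(\0,y)\mid\eta(x)=0]$, so that $\E^{\0}[T(\0,y)]=qa+(1-q)b$ and hence $\E^{\0}[T(\0,y)]-a=(1-q)(b-a)$. Adding particles can only speed up discovery, so $a\le b$ and this difference is nonnegative; it therefore suffices to prove $b-a\le C'$ for a constant $C'$ independent of $x$ and $y$, since then $C=(1-q)C'$ works.

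Next I would realize $b-a$ as an expected speed-up through a coupling. On a common family $\Pi=(\nu,S,L)$ let $\omega$ denote the configuration conditioned on $\{\eta(\0)\ge 1,\eta(x)=0\}$ and $\omega'$ the one conditioned on $\{\eta(\0)\ge 1,\eta(x)\ge 1\}$, identical off $x$ and with $\omega(x)=0<\omega'(x)$. Then $b-a=\E[D]$ with $D=T_\omega(\0,y)-T_{\omega'}(\0,y)\ge 0$ by monotonicity. The two processes agree up to the discovery time $\tau_x$ of $x$, because the dormant particles at $x$ influence nothing before $x$ is first hit and so $\tau_x$ is common; hence $D>0$ forces $y$ to be discovered after $x$ in $\omega$, and all of $D$ is due to the helper particles activated at $x$. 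Heuristically $D$ is only a fluctuation: the helper spawns a subprocess from $x$ which, by Theorem \ref{th:shape}, grows at the same asymptotic speed as the global front that reaches $x$ at time $\tau_x$. Writing $g$ for the norm-like gauge of $\cA$, the best the helper can achieve is the route $\0\to x\to y$ of cost $\approx g(x)+g(y-x)$, which by subadditivity of $g$ never beats the direct cost $g(y)\approx T_\omega(\0,y)$, so the leading orders cancel and $\E[D]$ should stay bounded.

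Finally I would turn this heuristic into a uniform estimate, which is the main obstacle. The delicate point is an upper bound on $D$, equivalently a lower bound on $T_{\omega'}(\0,y)$ that stays within $O(1)$ of $T_\omega(\0,y)$ uniformly in $x,y$: a diffusive lead of the finitely many helper particles (finite by Lemma \ref{le:finite}) could a priori persist, and empty sites, allowed since $\PP(\eta(x)=0)>0$, could create slow regions, so the bound cannot simply be read off subadditivity. This is exactly where Lemma \ref{le:liten_boll} enters: conditional on $\eta(\0)\ge 1$ the discovered set contains the linearly growing diamond $\cD_{\tau n}$ with probability at least $1-\alpha\exp(-n^{\beta})$, so the global front reliably sweeps over and re-absorbs any excursion of the helper within a time whose expectation is bounded independently of $y$, while $\E[\eta(x)]<\infty$ keeps the relevant passage-time expectations finite. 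Assembling these estimates yields $\E[D]=b-a\le C'$ uniformly, and the decomposition above then gives the lemma.
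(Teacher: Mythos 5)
Your opening reduction and coupling are exactly the paper's first step: the paper likewise writes $\E^{\0}[T(\0,y)]$ as a convex combination of $\E^{\0,x}[T(\0,y)]$ and the expectation conditioned on $\eta(x)=0$, and reduces the lemma to a uniform bound on the conditional speed-up caused by the particles at $x$. The gap is that the step you yourself call ``the main obstacle'' is never actually carried out: the sentence asserting that the front ``reliably sweeps over and re-absorbs any excursion of the helper within a time whose expectation is bounded independently of $y$'' \emph{is} the content of the lemma, and Lemma \ref{le:liten_boll} alone does not deliver it. The first missing piece is an argument that the speed-up $D$ is bounded by a single catch-up time at all: the helpers activate particles early, those particles activate others early, and so on, so one must show these gains propagate through the cascade without amplifying. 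The paper gets this from the coupling structure: each particle carries its own trajectory and delay variables, so a particle activated $\delta$ steps earlier does everything exactly $\delta$ steps earlier, whence the delay of the $\eta(x)=0$ process relative to the $\eta(x)\geq 1$ process is bounded by the time at which the former has discovered every site that the helper particles themselves discover in the latter. Your gauge-function heuristic cannot substitute for this: comparing $g(x)+g(y-x)$ with $g(y)$ via the shape theorem controls errors only on the scale $o(\lVert y\rVert_1)$, which is far from the required $O(1)$ bound.

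The second missing piece is the uniform expectation bound for that catch-up time, which is the bulk of the paper's proof. One dominates it by a translation-invariant variable $U$, defined as the time for a process started with a \emph{single} particle at the origin to discover all sites discovered by the origin particles in an independent copy started with $\eta(\0)\geq 1$ particles, and then bounds $\E[U]$ by first controlling $V$, the last time an origin particle discovers a new site, via
$$
\PP\big(V\geq v\mid \eta(\0)=k\big)\leq k\,\PP\big(S_n\notin\cD_{n^{1-\vep}}\ \text{for some }n\geq v\big)+\PP\big(\xi_n\not\supseteq\cD_{\tau n}\ \text{for some }n\geq v\big),
$$
with both terms summable by Lemma \ref{le:SRW} and Lemma \ref{le:liten_boll}. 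The factor $k$ from the union bound over the helper particles is precisely where $\E[\eta(w)]<\infty$ enters: it gives $\E[V\mid \eta(\0)=k]\leq kC_1+C_2$ and hence $\E[V]<\infty$, after which a second application of Lemma \ref{le:liten_boll} gives $\E[U]<\infty$. Your remark that finiteness of the mean ``keeps the relevant passage-time expectations finite'' misidentifies this role, and your citation of Lemma \ref{le:finite} for ``finitely many helper particles'' is also off (that lemma bounds the number of sites a given particle discovers, not the number of particles; the relevant point is rather that the tail bound degrades linearly in the random number $\eta(x)$ of helpers). So your architecture is the right one---it is the paper's---but the quantitative core, namely the delay-propagation inequality and the $V$/$U$ tail estimates, is asserted rather than proved.
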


\begin{proof}
Let $\E^{\0,\neg x}$ denote expectation conditional on $\eta(\0)\geq 1$ and $\eta(x)=0$. We show that 
$$
\E^{\0,\neg x}[T(0,y)]\leq \E^{\0,x}[T(\0,y)]+C',
$$
where $C'$ does not depend on neither $x$ nor $y$. Since $\E^{\0}[T(\0,y)]$ is a convex combination of $\E^{\0,x}[T(\0,y)]$ and $\E^{\0,\neg x}[T(\0,y)]$ this gives the desired bound. We hence want to quantify the delay in a process without particles at $x$ compared to a process with particles at $x$. Note that this delay is bounded from above by the time when all sites that are discovered by particles originating from a non-empty $x$ have been discovered in a process without particles at $x$. This time, in turn, is stochastically dominated by the time when all sites that are discovered by the origin particles in a process started from the origin with a non-zero number of particles have been discovered in another copy of the process started with one single particle at the origin. Write $U$ for this time.

Consider the particles initially located at the origin and write $V$ for the last time when one of them discovers a new site. Recall from Lemma \ref{le:SRW} that $S_n$ denotes a random walk and let $\tau$ be as in Lemma \ref{le:liten_boll}. For $v$ big enough, we then have that
$$
\PP(V\geq v|\eta(0)=k)\leq k\PP(S_n\not\in\cD_{n^{1-\varepsilon}}\mbox{ for some }n\geq v)+\PP(\xi_n\not\supseteq \cD_{\tau n}\mbox{ for some }n\geq v).
$$
It follows from Lemma \ref{le:SRW} and Lemma \ref{le:liten_boll}, respectively, that the probabilities on the right hand side are summable in $v$. Hence $\E[V|\eta(0)=k]\leq kC_1+C_2$ for some constants $C_1,C_2<\infty$ and, since $\E[\eta(0)]<\infty$, we conclude that $\E[V]<\infty$.

Now consider the second copy of the process started with only one particle at the origin and the related time $U$ defined above. Write $\tilde{\xi}_n$ for the set of discovered sites at time $n$ in a process with initial distribution $\PP(\tilde{\eta}(w)=1)=\PP(\eta(w)\geq 1)=1-\PP(\tilde{\eta}(w)=0)$ and let $\tau$ be as in Lemma \ref{le:liten_boll} for such a distribution. Then
$$
\PP(U\geq u)\leq \PP(V\geq \tau u)+\PP(\tilde{\xi}_u\not\supseteq\cD_{\tau u}).
$$
That the probabilities on the right hand side are summable in $u$ follows from $\E[V]<\infty$ and Lemma \ref{le:liten_boll}, respectively. Hence $\E[U]<\infty$, as desired.
\end{proof}

We are now ready to prove Theorem \ref{th:coex}.

\begin{proof}[Proof of Theorem \ref{th:coex}]
Consider a two-type process started with the particles at the origin type 1 and the particles at $\n$ type 2, where $n$ will be specified below. By Proposition \ref{pr:initial} (if $p_1=p_2<1$) and Lemma \ref{le:odd_z} (if $p_1=p_2=1$), it suffices to show that coexistence has a positive probability in this process. Assume for contradiction that $\PP_{\0,\n}(G_1\cap G_2)=0$. Then one of the types must have at least probability 1/2 of being the winner and we may without loss of generality assume that $G_1^c\cap G_2$ has probability at least 1/2 (note that, if the tie-breaker is fair, both types have probability exactly 1/2 of winning). The idea is to show that the passage time from $\n$ to $-\m$ is substantially larger than the passage time from $\0$ to $-\m$ for some large $m$. On the other hand, if type 2 is the winner, we obtain an estimate that contradicts this, since the passage time from $\n$ to $-\m$ must then be shorter than the passage time from $\0$ to $-\m$ for large $m$. 

Consider passage times based on $(\nu,S,L)$ and fix $\vep>0$. We first treat the case when the initial distribution allows for empty sites. For $x_1,\ldots ,x_k\in\Zd$, write $\PP^{x_1,\ldots,x_k}$ and $\E^{x_1,\ldots,x_k}$ for  probability and expectation, respectively, conditional on $\eta(x_j)\geq 1$ for $j=1,\ldots,k$. By \eqref{eq:mu}, for $C$ as in Lemma \ref{le:non_zero}, we can pick $n>\frac{2C}{\mu\vep}$ large enough such that
\begin{equation}\label{eq:ncond}
\E^\0[T(\n,\0)]\leq(1+\vep)n\mu\quad \mbox{and}\quad \PP^\0(T(\n,\0)<(1-\vep)n\mu)<\vep.
\end{equation}
It is straightforward to check that, for any event $B$ with $\PP^\0(B)\geq \alpha$, the inequalities in \eqref{eq:ncond} implies that
\begin{equation}\label{eq:cond_exp}
\E^\0\left[T(\n,\0)|B^c\right]\leq \left(1+\frac{3\vep}{1-\alpha}\right)n\mu.
\end{equation}
We now claim that
\begin{equation}\label{eq:lower_bd}
\E^{\0,\n}[T(\n,-\m)-T(\0,-\m)]\geq (1-\vep)n\mu
\end{equation}
for arbitrarily large $m$. To see this, note that, for any integer $k$, trivially
$$
\E^\0[T(\0,k\n)]=\E^\0[T(0,\n)]+\E^\0[T(0,2\n)-T(\0,\n)]+\ldots+\E^\0[T(\0,k\n)-T(\0,(k-1)\n)].
$$
Since $\E^\0[T(\0,k\n)]/k\to n\mu$ as $k\to\infty$, it follows that $\E^\0[T(\0,(k+1)\n)-T(\0,k\n)]\geq (1-\vep/2)n\mu$ for arbitrarily large $k$. Taking $\m=k\n$ and using invariance, we obtain that
$$
\E^\0[T(\0,(k+1)\n)-T(\0,k\n)]=\E^\n[T(\n,-\m)]-\E^\0[T(\0,-\m)].
$$
The latter expectation is trivially bounded from below by $\E^{\0,\n}[T(\0,-\m)]$ since conditioning on the presence of additional particles can only decrease passage times. For the former expectation, if the expected initial number of particles per site is finite, then we have by Lemma \ref{le:non_zero} and the choice of $n$ that $\E^\n[T(\n,-\m)]\leq \E^{\0,\n}[T(\n,-\m)] + n\mu\varepsilon/2$ and can conclude that \eqref{eq:lower_bd} holds for arbitrarily large $m$.

Now consider the symmetric two-type process. As described above, we are working under the assumption that $\PP_{\0,\n}^{\0,\n}(G_1^c\cap G_2)\geq 1/2$. By Lemma \ref{le:finite}, if type 1 activates only finitely many particles, then the number of sites discovered by type 1 is also almost surely finite. Hence
$$
\lim_{m\to\infty}\PP^{\0,\n}(T(\n,-\m)\leq T(\0,-\m))\geq \lim_{m\to\infty}\PP^{\0,\n}_{\0,\n}(-\m \mbox{ is discovered only by type 2})\geq 1/2.
$$
Now let $B=\{T(\n,-\m)\leq T(\0,-\m)\}$ and pick $m$ large such that \eqref{eq:lower_bd} holds and such that $\PP^{\0,\n}(B)\geq 1/4$. We then obtain that

\begin{eqnarray*}
\E^{\0,\n}[T(\n,-\m)-T(\0,-\m)] & \leq & \E^{\0,\n}[T(\n,-\m)-T(\0,-\m)|B^c]\PP^{\0,\n}(B^c)\\
& \leq & \frac{3}{4}\E^{\0,\n}[T(\n,-\m)-T(\0,-\m)|B^c].
\end{eqnarray*}

By subadditivity, we have that $T(\n,-\m)-T(\0,-\m)\leq T(\n,\0)$, and \eqref{eq:cond_exp} hence yields that
$$
\E^{\0,\n}[T(\n,-\m)-T(\0,-\m)]\leq \frac{3}{4}(1+4\vep)n\mu.
$$
If $\vep$ is small, this contradicts \eqref{eq:lower_bd}, and we conclude that $\PP_{\0,\n}(G_1\cap G_2)>0$, as desired.

For initial distributions without empty sites we note that the conditioning on some sites being non-empty is throughout superfluous and the proof then goes through without the comparison of $\E^\n[T(\n,-\m)]$ and $\E^{\0,\n}[T(\n,-\m)]$ provided by Lemma \ref{le:non_zero}, that required $\E[\eta(w)]<\infty$.
\end{proof}

\section*{Appendix}

Here we sketch how Theorem \ref{th:shape} and Theorem \ref{th:diamond} follow from the same arguments as in the proofs of their analogues for non-lazy processes in \cite{frogs_shape,frogs_shape_random}.

As for Theorem \ref{th:shape}, this is an extension of \cite[Theorem 1.1]{frogs_shape_random} to a lazy process. This, in turn is a generalization to arbitrary initial distributions of \cite[Theorem 1.1]{frogs_shape}, which is restricted to processes started with one particle per site. The proof in \cite{frogs_shape_random} has the same structure as that in \cite{frogs_shape}, but requires some non-trivial additions to deal with initial configurations with empty sites. Specifically, the notion of $m$-good initial configurations is introduced. For such configurations, which are shown to occur with high probability, the same arguments as in \cite{frogs_shape} can be applied. Here we content ourselves with noting that these modifications go through also for a lazy process, and move on to describe how the key arguments from \cite{frogs_shape} are modified for a lazy process.

Recall that $T(x,y)$ denotes the time when the site $y$ is discovered in a process started with the particles at $x$ active at time 0. We now include the laziness parameter $p$ in the notation and write $T_p(x,y)$ for the passage time when the particles jump with probability $p$ in each time step. As usual, the process is constructed using the randomness in $(\nu,S,L)$.

The key ingredient in the proof of the shape theorem in \cite{frogs_shape} is the subadditive ergodic theorem \cite{liggett}. This is applied to the passage times $\{T_1(x,y)\}_{x,y\in\Zd}$ to conclude that $T_1(\0,nx)/n$ converges almost surely and in $L_1$ to some constant $\mu(x)>0$ for each $x\in\Zd$. All conditions of the subadditive ergodic theorem are easy to verify, except the requirement that $\E[T_1(\0,\1)]<\infty$. The main challenge in \cite{frogs_shape} is to verify this and the key result is the following tail bound for $T_1(0,x)$, formulated in \cite[Theorem 3.2]{frogs_shape} and extended in \cite[Lemma 2.1]{frogs_shape_random} to include also $d=1$.\footnote{The analogue estimate for $m$-good random initial configurations is given in \cite[Lemma 2.2]{frogs_shape_random}.}

\begin{theorem}[Lemma 2.1 \cite{frogs_shape_random}]\label{th:shap_est}
Suppose that $\eta(x)=1$. For all $d\geq 1$ and all $x\in \Zd$, there exist constants $\alpha=\alpha(x,d)>0$ and $\beta=\beta(d)>0$ such that
$$
\PP(T_1(0,x)\geq m ) \leq \alpha\exp\{-m^{-\beta}\}
$$
for all $m$.
\end{theorem}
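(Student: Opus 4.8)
The plan is to establish the stated tail bound by showing that, with overwhelming probability, activation propagates from the origin to the fixed site $x$ in time proportional to $\|x\|_1$, and that an anomalously slow propagation up to time $m$ is at least stretched-exponentially unlikely. The naive relay---waiting for one designated frog to step onto the next site along a path to $x$---is doomed because the neighbour hitting time of a simple random walk is heavy-tailed (non-integrable for $d\le 2$). The key is instead to exploit the abundance of frogs that the process activates: once many independent active walks are available, each individual propagation step becomes reliable. I would therefore set up a space--time renormalisation and compare the spread of activation to a supercritical oriented percolation.

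First I would coarse-grain, partitioning time into intervals of a fixed length $t_0$ and $\Zd$ into boxes of side $r_0$, with $r_0$ chosen of order $\sqrt{t_0}$ so that the spatial and diffusive scales match. Call a space--time block \emph{good} if the following holds whenever its box contains an active frog at the start of the interval: by the end of the interval the process has activated at least $K$ new frogs within a bounded neighbourhood of the box, and at least one active frog has reached each of the $2d$ coordinate-neighbouring boxes. The heart of the matter is the one-block estimate, that for suitable $r_0,t_0,K$ a block is good with probability at least $1-\vep$, with $\vep$ as small as desired. I would prove this in two stages. In the \emph{seeding} stage a single active frog is run for time $\theta t_0$; a lower bound on the range of simple random walk---the number of distinct sites visited grows with $t_0$ in every dimension, of order $\sqrt{t_0}$, $t_0/\log t_0$, $t_0$ for $d=1,2,\ge 3$, with moderate-deviation control in the spirit of Lemma \ref{le:SRW}---guarantees that at least $K$ frogs are activated with probability at least $1-\vep/2$, all within distance $O(\sqrt{t_0})$ of the box. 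In the \emph{push} stage, each of these $K$ frogs has, by the local central limit theorem, probability bounded below by some $\rho>0$ of lying in a prescribed neighbouring box at the end of the interval; as distinct frogs move independently, the chance that none of them does is at most $(1-\rho)^K$, and a union bound over the $2d$ directions makes the total failure probability at most $\vep/2$ once $K$ is large.

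Since each good-block event depends only on the frogs and walks inside a bounded space--time region, the family of good blocks is finitely dependent, so by the standard stochastic-domination theorem for finite-range fields (Liggett--Schonmann--Stacey) it dominates an independent oriented percolation that is supercritical once $\vep$ is small. Activation started at the origin then spreads at least as fast as the wet cluster of this percolation. For the fixed target $x$, which is only $O(1)$ boxes away, the event $\{T_1(0,x)\ge m\}$ forces the wet cluster not to have covered the block of $x$ within $\lfloor m/t_0\rfloor$ time layers; the exponential estimates for supercritical oriented percolation---the probability that a point deep in the percolation cone is still dry after $n$ layers decays exponentially in $n$---then give a bound of the claimed form $\alpha\exp\{-m^{\beta}\}$, the prefactor $\alpha=\alpha(x,d)$ absorbing the bounded-time, positive-probability cost of first seeding $K$ frogs in the origin block.

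The main obstacle is the seeding half of the one-block estimate: forcing a single active frog to activate at least $K$ new frogs with probability close to $1$ in a fixed time. A single walk can leave any prescribed box almost at once, so one cannot confine the seeding spatially; instead one takes $t_0$ large and relies on a high-probability \emph{lower} bound on the range, which is most delicate in the recurrent dimensions $d=1,2$, where the range is only of order $\sqrt{t_0}$ respectively $t_0/\log t_0$ and one must bound the probability that the walk stays confined to too small a set. Once this confinement estimate is in hand, the domination and oriented-percolation steps are routine; the argument in fact yields a genuine exponential tail, of which the stated stretched-exponential bound is a weakening, so any loss in the range estimate is comfortably absorbed.
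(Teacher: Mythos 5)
A preliminary remark: the paper never proves this statement. It is imported verbatim (as \cite[Lemma 2.1]{frogs_shape_random}, extending \cite[Theorem 3.2]{frogs_shape}) and used as a black box in the appendix, so your attempt can only be compared with the proofs in those references. Note also that the displayed bound contains a typo: it should read $\alpha\exp\{-m^{\beta}\}$, as the paper's own later use of it ($\PP(T_p(\0,x)\geq 4m/p)\leq\alpha'\exp\{-cm^{\beta}\}$) confirms; as printed, with $-m^{-\beta}$ in the exponent, the inequality is vacuous.

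The decisive gap in your argument is the sentence ``since each good-block event depends only on the frogs and walks inside a bounded space--time region, the family of good blocks is finitely dependent.'' In the frog model all randomness is attached to the frogs, not to space--time: the frog at $y$ carries one walk, indexed by the number of steps since its own activation, and there is no independent refresh of randomness from one time interval to the next. You therefore face a dichotomy: either you define the block event through the walks of the frogs initially located in and around the box --- and then the block events for that box in \emph{every} time layer are built from the same walks, so dependence has infinite range in the time direction --- or you define it through whichever frogs happen to be present during the interval, and then the event is not measurable with respect to local randomness at all, since which frogs are present depends on the entire history. Either way Liggett--Schonmann--Stacey does not apply; worse, frogs are a one-shot resource (once those near a box have been activated and have dispersed, the box can never be re-seeded), whereas independent oriented percolation re-randomizes every layer. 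This is not a fixable technicality, because the conclusion your scheme would produce --- an exponential tail, as you yourself note --- is false. In $d=1$, consider the event that every frog, from the moment of its activation, stays in $\{y\leq 0\}$ for its first $m$ steps. These events concern disjoint, hence independent, walks; their intersection forces $T_1(0,1)>m$; and by a reflection-principle estimate the frog started at $-j$ pays a factor of roughly $\min\bigl(1,\,C(j+1)/\sqrt m\bigr)$, so the product over $j$ is $\exp\{-\Theta(\sqrt m)\}$. Thus $\PP(T_1(0,1)\geq m)\geq\exp\{-C\sqrt m\}$, and no correct argument can yield $\exp(-cm)$: the stretched exponent in the statement is genuine, not an artifact of the proof. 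This scenario also shows exactly where the temporal dependence bites: when the activated cloud drifts left, the blocks to its right are bad once and for all, a cost paid once per frog rather than once per layer, which is how $\exp(-c\sqrt m)$ beats the $\exp(-cm)$ that independent layers would impose.

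Two further points. First, even granting domination, supercritical oriented percolation started from a single wet site dies out with probability bounded away from $0$, and on that event your comparison says nothing about $T_1(0,x)$; one would need a restart argument, which runs into the same exhausted-randomness problem. Second, your one-block heuristic (seeding via lower bounds on the range of the walk, pushing via the local CLT) is sound and does correspond to real ingredients of \cite{frogs_shape,frogs_shape_random}; but what those proofs actually accomplish --- and what your sketch dismisses as ``routine'' --- is an architecture in which every estimate consumes only the randomness of frogs not used before, and that is precisely why the bound they obtain is stretched-exponential rather than exponential.
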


Given this estimate and the conclusion of the subadditive ergodic theorem, the shape theorem follows from standard arguments; see \cite{frogs_shape} for details.

We now describe how the above bound can be extended to the passage time $T_p(\0,\1)$ for a lazy process by comparing $T_p(\0,x)$ to its analogue $T_1(\0,x)$ in the process without laziness. To this end, couple the processes by constructing them from the same random walks $S$. The site $x$ is discovered at time $T_1(\0,x)$ in the process without laziness and we can hence fix a sequence of $T_1(\0,x)$ particle jumps leading up to the discovery of $x$. Consider the particle involved in the $i$th such jump, let $x_{i-1}$ be its location before the jump and write $D_i$ for the number of time steps that the particle remains at $x_{i-1}$ in the lazy process before performing the jump. By construction, the random variables $\{D_i\}_{i=1}^{T_1(\0,x)}$ are independent and geometrically distributed with parameter $p$. Furthermore, it is easy to check that
\begin{equation}\label{eq:co}
T_p(\0,x) \leq T_1(\0,x) + \sum_{i=1}^{T_1(\0,x)}D_i.
\end{equation}
For any $m\in \mathbb{N}$ we hence have that
\begin{eqnarray}
\label{eq:bd}\PP \left(T_p(0,x) \geq \frac{4m}{p} \right) & \leq & \PP \left(T_1(\0, x)\geq \frac{2m}{p}\right) +  \PP\left(\sum_{i=1}^{T_1(\0,x)}D_i \geq \frac{2m}{p} \right)\\
\nonumber& \leq & 2\PP \left(T_1(\0, x)\geq m\right) +  \PP\left(\sum_{i=1}^mD_i \geq \frac{2m}{p} \right),
\end{eqnarray}
where the last inequality follows by conditioning on whether $T_1(\0,x)\geq m$ or not in the last term in \eqref{eq:bd}, and dominating the first term with $\PP(T_1(\0, x)\geq m)$.

Now note that $\sum_{i=1}^mD_i$ has a negative binomial distribution with parameters $m$ and $p$. Hence $\PP\left(\sum_{i=1}^mD_i\geq \frac{2m}{p}\right)\leq \PP\left(Y\leq m\right)$, where $Y$ is binomially distributed with parameters $\lceil2m/p\rceil$ and $p$, and hence $\E[Y]\geq 2m$. A standard Chernoff bound for the binomial distribution yields that $\PP(Y\leq m)\leq e^{-cm}$ for some constant $c>0$ and all $m\in\N$. Using Theorem \ref{th:shap_est}, we conclude that
$$
\PP \left(T_p(0,x) \geq \frac{4m}{p} \right)\leq \alpha'\exp\{-cm^\beta\},
$$
where $\alpha'=\alpha'(x,d)$ and $\beta=\beta(d)$ are positive finite constants. This bound serves the same purpose as the one in Theorem \ref{th:shap_est} for a lazy process.

We now give a short sketch of the proof of Theorem \ref{th:diamond}, that is, we describe why the ``full diamond'' result, Theorem 1.2 in \cite{frogs_shape_random}, still holds with laziness. To this end, let us first revisit Lemma 2.5 in \cite{frogs_shape_random}, which states that in dimension $d\geq2$, there exist constants $\tau\in(0,1)$ and $\alpha,\beta>0$ only depending on $d$, such that, conditional on $\{\eta(0)\geq 1\}$, for all $n\in\N$ and $x\in\Z^d$:
\begin{equation}\label{eq:lazy2.5}
\PP\big(\cD_{\tau n}^x\subseteq \bar{\xi}_{n+T(0,x)}\big)\geq 1- \alpha\,\exp(-n^\beta),
\end{equation}
where $\cD_r^x=\{y\in\RR^d:\;\lVert x-y\rVert_1\leq r\}$. An inspection of the proof reveals that the bound applies also in $d=1$ (although this is not needed in \cite{frogs_shape_random}). This is formulated in our Lemma \ref{le:liten_boll} for $x=\0$.

The crucial observation, which allows us to transfer the results from the original process to the lazy one, is that all estimates on discovered sites and activated particles are merely down-sized by a constant factor $p$. Specifically, the key to the proof of Lemma 2.5 is the fact that, for a simple symmetric random walk $(S_n)_{n\in\N}$ on $\Z^d$, there exists $\beta>0$ such that for all $d\geq 1$ and $k\geq1$:
$$
\PP\big(|\{S_j, 0\leq j \leq k^{1/2}\}|\geq k^{1/4}\big)\geq\beta,
$$
This follows from standard estimates using that the expected range of $(S_n)_{n=1}^k$ is $\Theta(\sqrt{k})$ for $d=1$, $\Theta\big(\frac{k}{\log k}\big)$ for $d=2$ and $\Theta(k)$ for $d\geq3$. This, however, also holds for a lazy walk and together with the lazy version of Theorem \ref{th:shap_est}, we arrive at \eqref{eq:lazy2.5} for our setting.

In the proof of Theorem 1.2 in \cite{frogs_shape_random}, the estimate \eqref{eq:lazy2.5} is used to show that, for $\theta\in\big(\frac{\delta}{d},1\big)$, the probability that there exists a site $x\in \cD_{\tau n^\theta}$ with $\eta(x)\geq(4d)^n$ that has been activated by time $n^\theta$, is at least $1-\exp(-Cn^{\theta d-\delta})$, where $C=\big(\log (4d)\big)^{-\delta}$. Conditioned on the existence of such an $x$, it is not hard to conclude that $\cD_{n-2Cn^\theta}\subseteq\bar{\xi}_{n+n^\theta}$ with high probability. In order to mimic this argument, we need $\eta(x)\geq\big(\frac{4d}{p}\big)^n$ instead. Given the strong condition on the tail of $\eta$ in the assumptions of Theorem \ref{th:diamond}, however, we find that
$$
\PP\Big(\eta(x)<\big(\tfrac{4d}{p}\big)^n\Big)\leq1-
\Big(\log\big(\tfrac{4d}{p}\big)\Big)^{-\delta}\,n^{-\delta}
$$
and slightly tweaking the constants will do the job.


\begin{thebibliography}{99}

\bibitem{urns} Ahlberg, D., Griffiths, S., Janson, S. and Morris, R. (2017): Competition in growth and urns, \emph{Rand. Struct. Alg.}, to appear.

\bibitem{frogs_shape} Alves, O., Machado, F. and Popov, S. (2002): The shape theorem for the frog model, \emph{Ann. Appl. Probab.} \textbf{12}, 533-546.

\bibitem{frogs_shape_random} Alves, O., Machado, F., Popov, S. and Ravishankar, K. (2001): The shape theorem for the frog model with random initial configuration, \emph{Markov Proc. Rel. Fields} \textbf{7}, 525-539.

\bibitem{phase_transition} Alves, O., Machado, F. and Popov, S. (2002): Phase transition for the frog model, \emph{Electr. J. Probab.} \textbf{7}, 1-21.

\bibitem{Bill} Billingsley, P. (1995): \emph{Probability and measure}, Wiley.

\bibitem{pleasures} Deijfen, M.\ and H\"aggstr\"om, O.\ (2007): The pleasures and pains of studying the two-type Richardson model, in \emph{Analysis and Stochastics of Growth Processes and Interface Models}, Oxford University Press, pp 39-54.

\bibitem{cont_coex} Deijfen, M.\ and H\"aggstr\"om, O.\ (2004): Coexistence in a two-type continuum growth model, \emph{Adv. Appl. Probab.} \textbf{36}, 973-980.

\bibitem{cont_comp} Deijfen, M., H\"aggstr\"om, O.\  and Bagley, J.\ (2004): A stochastic model for competing growth on $\mathbb{R}^d$, \emph{Markov Proc. Relat. Fields} \textbf{10}, 217-248.

\bibitem{moderate} Eichelsbacher, P.\ and L\"{o}we, M.\ (2003): Moderate deviations for i.i.d.\ random variables, {\em ESAIM Probab. Stat. } {\bf 7}, 209-2018.

\bibitem{GM_coex} Garet, O.\ and Marchand, R.\ (2005): Coexistence in two-type first-passage percolation models, {\em Ann. Appl. Probab.} {\bf 15}, 298-330.

\bibitem{GM_invisible} Garet, O.\ and Marchand, R.\ (2007): First-passage competition with different speeds: positive density for both species is impossible, \emph{Electron. J. Probab.} {\bf 13}, 2118-2159.

\bibitem{HP1} H\"{a}ggstr\"{o}m, O. and Pemantle, R. (1998): First passage percolation and a model for competing spatial growth, \emph{J. Appl. Probab.} \textbf{35}, 683-692.

\bibitem{HP2} H\"{a}ggstr\"{o}m, O.\ and Pemantle, R.\ (2000): Absence of mutual unbounded growth for almost all parameter values in the two-type Richardson model, \emph{Stoch. Proc. Appl.} \textbf{90},
207-222.

\bibitem{Hoff_coex} Hoffman, C.\ (2005): Coexistence for Richardson type competing spatial growth models, {\em Ann. Appl. Probab.} {\bf 15}, 739-747.

\bibitem{KS_03} Kesten, H. and Sidoravicius, V. (2003): Branching random walk with catalysts, \emph{Electron. J. Probab.} {\bf 8}, 1-51.

\bibitem{KS_05} Kesten, H. and Sidoravicius, V. (2005): The spread of a rumor or infection in a moving population, \emph{Ann. Probab.} {\bf 33}, 2402-2462.

\bibitem{KS_08} Kesten, H. and Sidoravicius, V. (2008): A shape theorem for the spread of an infection, \emph{Ann. Math.} {\bf 167}, 701-766.

\bibitem{RW_competition} Kurkova, I., Popov, S. and Vachkovskaia, M. (2004): On infection spreading and competition between independent random walks, \emph{Electr. J. Probab.} \textbf{9}, 293-315.

\bibitem{liggett} Liggett, T.M. (1985): An improved subadditive ergodic
theorem, \emph{Ann. Prob.} \textbf{13}, 1279-1285.

\bibitem{MV} Mountford, T. and Valesin, D. (2016): Functional central limit theorem for the interface of the symmetric multitype contact process, \emph{ALEA} \textbf{13}, 481-519.

\bibitem{MPV} Mountford, T., Pantoja, P. and Valesin, D. (2018): The asymmetric multitype contact process, \emph{Stoch. Proc. Appl.}, to appear.

\bibitem{Neu} Neuhauser, C. (1992): Ergodic theorems for the multitype contact process, \emph{Probab. Theory Related Fields} \textbf{91}, 467-506.

\bibitem{combustion} Ram\'{i}rez, A. and Sidoravicius, V. (2004): Asymptotic behavior of a stochastic combustion process, \emph{J. Eur. Math. Soc.} \textbf{6}, 292-334.

\bibitem{VA} Sidoravicius, V. and Stauffer, A. (2016): Multi-particle diffusion limited aggregation, arxiv.org/abs/1603.03218.

\bibitem{telcs} Telcs, A. and Wormald, N. (1999): Branching and tree indexed random walks on fractals, \emph{J. Appl. Probab.} \textbf{36}, 999-1011.

\end{thebibliography}
\end{document}